\documentclass[12pt,reqno]{amsart}
\usepackage{amssymb,delarray}
\usepackage{amsfonts,amscd}
\usepackage{epsfig}
\usepackage[all]{xy}

\def\leq{\leqslant}
\def\geq{\geqslant}

\newtheorem{thm}{Theorem}
\newtheorem{lem}
{Lemma}
\newtheorem{prop}
{Proposition}
\newtheorem{claim}
{Claim}
\newtheorem{df}
{Definition}
{Corollary}
{Remark}
{Question}
\newtheorem{ex-thm}{Theorem-Example}

{\catcode`\@=11
\gdef\n@te#1#2{\leavevmode\vadjust{%
 {\setbox\z@\hbox to\z@{\strut#1}%
  \setbox\z@\hbox{\raise\dp\strutbox\box\z@}\ht\z@=\z@\dp\z@=\z@%
  #2\box\z@}}}
\gdef\leftnote#1{\n@te{\hss#1\quad}{}}
\gdef\rightnote#1{\n@te{\quad\kern-\leftskip#1\hss}{\moveright\hsize}}
\gdef\?{\FN@\qumark}
\gdef\qumark{\ifx\next"\DN@"##1"{\leftnote{\rm##1}}\else
 \DN@{\leftnote{\rm??}}\fi{\rm??}\next@}}

\begin{document}
\baselineskip=13.7pt plus 2pt

\title[On finite covers of projective spaces] {On finite covers of projective spaces, generic outside of codimension three}

\author[Vik.S. Kulikov]{Vik.S. Kulikov}

\address{Steklov Mathematical Institute of Russian Academy of Sciences, Moscow, Russia}
 \email{kulikov@mi-ras.ru}

\dedicatory{} \subjclass{}
\thanks{This work was performed at the Steklov International Mathematical Center and supported by the Ministry of Science and Higher Education of the Russian Federation (agreement no. 075-15-2022-265).}

\keywords{}

\maketitle 

\begin{abstract} In the article, we generalize to an arbitrary dimension  results obtained by the author and S.Yu. Nemirovski 
that related to Chisini's conjecture claimed that if $B$ is the discriminant of two generic covers $f_i:X_i\to \mathbb P^2$, $i=1,2$, and $\max (\deg f_1,\deg f_2) \geq 5$, then $f_1$ and $f_2$ are equivalent covers. 
\end{abstract} \vspace{0.5cm}

\def\st{{\sf st}}

\def\st{{\sf st}}

\setcounter{section}{-1}

\section{Introduction}
Let $X$ be a non-singular irreducible projective variety, $\dim X=k$,  defined over the field of complex numbers $\mathbb C$, and 
$f: X \to \mathbb P^k$  a finite morphism onto the projective space $\mathbb P^k$, $\deg f =d$. Two finite morphisms $f_i:X_i\to \mathbb P^k$, $i=1,2$, are {\it equivalent} if there is an isomorphism $\varphi: X_1\to X_2$ such that $f_1=f_2\circ\varphi$.

The {\it discriminant} $ B_f=\{ p\in \mathbb P^k\, |\, \sharp(f^{-1}(p))\leq d-1\}$ of a finite morphism  $f:X\to \mathbb P^k$ is a subvariety of 
$\mathbb P^k$ of codimension 1. The map  $f:X\setminus f^{-1}(B_f) \to \mathbb P^k\setminus B_f$ is an unramified cover of degree $d=\deg f$.
The divisor   $f^*(B_f)\in \text{Div}\, X$ is a sum of divisors, 
$$f^*(B_f) = \sum r_i R_i+C_f,\qquad r_i\geq 2,$$ 
where ${ R_f}=\sum R_i$ (called the {\it ramification locus} of $f$) and $C_f$ are reduced divisors. 

The set of {\it generic covers} of projective spaces, as a subset of all finite covers, plays an important role in the study of the properties of nonsingular projective varieties. In the case of $\dim X=1$, a finite morphism $f: X \to \mathbb P^1$, $\deg f=d$, is called a {\it generic cover} of the projective line $\mathbb P^1$ if the number $\sharp(f^{-1}(p))\geq d-1$ for all $p\in\mathbb P^1$. 
In the case of $\dim X=2$, a finite morphism $f: X \to \mathbb P^2$ is a {\it generic cover} of the projective plane if 
\begin{itemize}
\item[$(\text{i})$] $f^*(B_f)= 2R_f+C_f$, 
\item[$(\text{ii})$] the curve $B_f\subset \mathbb P^2$ is irreducible and $f_{\mid R_f}: R_f\to B_f$ is a birational morphism, 
\item[$(\text{iii})$]  all points $p\in \text{Sing}\, B_f$ are ordinary nodes and ordinary cusps only. 
\end{itemize}

\begin{df} Let $D\subset X$ be a divisor in a non-sindular variety $X$, $\dim X=k>1$. A  point $p\in D$ is said to be an ordinary $(k-2)$-dimensional node (resp., an ordinary $(k-2)$-dimensional cusp (below simply $(k-2)$-node (resp.,  $(k-2)$-cusp)) if there are a neighborhood (in complex topology) $U_p\subset X$ of the point $p$ and local coordinates $z_1,\dots, z_k$ in $U_p$ such that $D\cap U_p$ is given by equation $z_1z_2=0$ (resp., $z_1^2-z_2^3=0$). 
\end{df} 
 
A finite morphism $f: X \to \mathbb P^k$, $k\geq 2$, is said to be a {\it cover, generic outside of codimension three} if 
\begin{itemize} 
\item[(i)] $f^{*}(B_f)=2R_f+C_f$,  
\item[(ii)] $B_f$ is irreducible and $f_{|R_f}:R_f\to B_f$ is a birational morphism;  
\item[(iii)] $\text{Sing}\, B_f = S_{f,n}\sqcup S_{f,c}\sqcup S_{f,3}\subset B_f$ is a disjoint union of three quasi-projective varieties (if $\text{Sing}\, B_f\neq \emptyset$), where
\begin{itemize} \item[(1)] $S_{f,3}$ is a closed subvariety of $\text{Sing}\, B_f$, $\dim S_{f,3}\leq k-3$, 
\item[(2)] $S_{f,2}=S_{f,n}\sqcup S_{f,c}$ is an everywhere dense Zariski open subset of $\text{Sing}\, B_f$, $S_{f,n}$ is the set of $(k-2)$-nodes, and  $S_{f,c}$ is the set of $(k-2)$-cusps of $\text{Sing}\, B_f$.
\end{itemize}  
\end{itemize}

We call $S_{f,n}$ the {\it nodal part} of $\text{Sing}\, B_f$ and $S_{f,c}$ is called the {\it cuspidal part} of $\text{Sing}\, B_f$. 

\begin{thm} \label{gdeg} Let $f: X \to \mathbb P^k$ be a cover, generic outside of codimension three. 

If $k>1$ and $S_{f,c}=\emptyset$ then the discriminant $B_f$ is non-singular and $\deg f=2$. 

If $S_{f,n}=\emptyset$, but $\text{Sing}\, B_f\neq \emptyset$, then  $\deg f\leq 5$.
\end{thm}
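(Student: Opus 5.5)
The plan is to reduce both statements to the case $k=2$ by taking a general plane section, and then to use the known properties of generic covers of the plane together with the structure of the fundamental group of the complement of the branch curve.

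\emph{Reduction to surfaces.} Take a general plane $P\cong\mathbb P^2\subset\mathbb P^k$; for $k=2$ take $P=\mathbb P^2$ itself. By Bertini, $X_P=f^{-1}(P)$ is a non-singular irreducible surface; irreducibility holds because $X$ is irreducible and $k\geq 2$. The restriction $f_P:X_P\to P$ is finite of degree $d$, and $B_{f_P}=B_f\cap P$. Since $\dim S_{f,3}\leq k-3$, a general $P$ misses $S_{f,3}$. It meets $S_{f,n}$ and $S_{f,c}$ transversally, in ordinary nodes and cusps of the curve $B_f\cap P$. Conditions (i) and (ii) also survive: $B_f\cap P$ is irreducible by Bertini applied to the irreducible hypersurface $B_f$, and the restriction of $R_f\to B_f$ stays birational. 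So $f_P$ is a generic cover of $\mathbb P^2$. Moreover $\text{Sing}(B_f\cap P)$ is empty exactly when $\text{Sing}\,B_f$ has codimension $\geq 3$ in $\mathbb P^k$. This happens when $S_{f,2}=\emptyset$, i.e.\ when $\text{Sing}\,B_f=\emptyset$, because $S_{f,2}$ is dense in $\text{Sing}\,B_f$.

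\emph{First statement.} Assume $S_{f,c}=\emptyset$, so the plane curve $\bar B=B_f\cap P$ has only nodes. By Zariski's theorem (Deligne--Fulton), $\pi_1(P\setminus\bar B)$ is abelian. It is generated by geometric generators, all conjugate because $\bar B$ is irreducible, so it is cyclic. The monodromy $\pi_1\to S_d$ of the generic cover sends each geometric generator to a transposition, and its image is transitive because $X_P$ is irreducible. A transitive group generated by a single transposition forces $d=2$.

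It remains to show that $B_f$ itself is non-singular. For a double cover, branched along $\bar B$ with $f_P^*\bar B=2R$, the surface $X_P$ is locally $w^2=g$ with $g$ a local equation of $\bar B$. A node $g=z_1z_2$ would make $X_P$ singular, contradicting smoothness of $X_P$. Hence $\bar B$ is smooth and $S_{f,n}=\emptyset$. Then $\text{Sing}\,B_f=\emptyset$, since $S_{f,2}=\emptyset$ is dense in it. The same local argument could be applied directly along $S_{f,n}$ in $X$ itself.

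\emph{Second statement.} Assume $S_{f,n}=\emptyset$ and $\text{Sing}\,B_f\neq\emptyset$. Then $S_{f,c}\neq\emptyset$, and it has codimension 2, so $\bar B$ is a cuspidal curve with at least one cusp and no nodes. The relation at a cusp says that the two local geometric generators $a,b$ satisfy $aba=bab$, and their images under monodromy are transpositions $\sigma,\tau$ satisfying the braid relation, which forces $\sigma\tau$ to have order 3. Since $\bar B$ has no nodes, there are no commutation relations. So the monodromy group $G\subset S_d$ is a transitive group generated by transpositions, hence $G=S_d$.

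The main obstacle is to bound $d$ from this data. My first attempt is the Chisini-type numerical approach. Set $m=\deg\bar B$ and let $c$ be the number of cusps. For a generic cover of degree $d$, the classical formulas of Kulikov and Nemirovski relate $m$, $c$, the genus of $\bar B$ (equivalently, of $R$), and $d$ through the Hurwitz formula applied to a general line and to the ramification curve. With no nodes, the Plücker-type relations become strong. Combining $g(\bar B)=(m-1)(m-2)/2-c\geq 0$ with the inequality $c\leq$ (the cusp count of a generic cover of degree $d$) and the nodal count formula set equal to zero should yield $d\leq 5$.

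As an alternative, I would exploit that $\pi_1(P\setminus\bar B)$ for a cuspidal curve with no nodes maps onto a quotient of the braid-type group in which all generators are conjugate and adjacent ones satisfy braid relations. I would then appeal to the known fact, from the Kulikov--Nemirovski analysis, that a nodeless cuspidal branch curve of a generic cover has degree at most 5. I expect verifying this numerical inequality to be the hardest part.
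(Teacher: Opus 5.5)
Your reduction to a general plane section and your proof of the first statement follow the paper's proof: a plane section, abelian $\pi_1$ of the complement of a nodal curve, and monodromy group $\mathbb S_d$ generated by transpositions. Your local argument that a node would make the double cover $X_P$ singular makes explicit the paper's brief remark ``since $X$ is non-singular''.

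The gap is in the second statement. The paper does not prove $\deg f\leq 5$ itself; it applies Theorem~2 of \cite{K3} to the generic plane section, which is your fallback. As you phrase it, however (a nodeless cuspidal branch curve ``has degree at most $5$''), the fact is false. The bound is on $\deg f$, not on $\deg\bar B$. For instance, the nine-cuspidal sextic dual to a smooth cubic is the nodeless branch curve of the generic projection of the Veronese surface, which is a cover of degree $4$.

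Your own numerical route does not close as sketched. The Hurwitz, adjunction and genus relations are identities among $m,c,g,R^2,d$ and give no bound on $d$. There is no ``cusp count of a generic cover of degree $d$'' in terms of $d$ alone. The condition $g\geq 0$ only gives $c\leq (m-1)(m-2)/2$. Even combined with the inequality below, that yields only $d\leq 2m^2/(3m-2)$, which is unbounded in $m$.

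What is missing is a genuine inequality. Put $E=f_P^*(\text{line})$ and $R=R_{f_P}$, so $E^2=d$ and $E\cdot R=m$. We have $f_P^*\bar B=2R+C$ and $R\cdot f_P^*\bar B=\bar B^2=m^2$. Also $R\cdot C=2c$: there is order-$2$ contact over each cusp by Theorem~\ref{T}(3), and no other contact when there are no nodes. Together these give $2R^2=m^2-2c$. The Hodge index theorem for the ample class $E$ then gives
\[
m^2=(E\cdot R)^2\geq E^2R^2=\tfrac{d}{2}\,(m^2-2c).
\]
So you need $c$ clearly below $m^2/3$. That is a Bogomolov--Miyaoka--Yau phenomenon, not a consequence of the genus formula. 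It is supplied by the bound $c\leq\frac{5}{16}m^2-\frac38 m$ for cuspidal plane curves of even degree. This bound comes from Miyaoka's inequality on the double plane branched along $\bar B$ when $m\geq 6$; smaller $m$ are immediate, and $m$ is even by Hurwitz. It yields $d\leq 16m/(3m+6)<16/3$, hence $d\leq 5$. Either supply this argument, or cite Theorem~2 of \cite{K3} correctly stated, as the paper does.
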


Denote by $\text{pr}_{L}:\mathbb P^N\dasharrow \mathbb P^{k}$ the linear projection with center in a linear projective linear subspace $L\subset \mathbb P^N$,
$\dim L=N-k-1$, and let $\text{Gr}_{N,N-k-1}$ be the Grassmannian the points of which parameterize the set of all $(N-k-1)$-dimensional projective subspaces $L$ of the projective space $\mathbb P^N$ (further, the point in Grassmannian corresponding to a linear projective subspace $L\subset \mathbb P^N$ is denoted by the same letter $L$). It is well-known (see, for example, Theorem 7 in Chapter I of \cite{Sh}) that if $L\in \text{Gr}_{N,N-k-1}$  is such that $X\cap L=\emptyset$, then  the restriction $f_L=\text{pr}_{L|X}:X\to\mathbb P^k$ of the projection $\text{pr}_L$ to $X$, $\dim X=k$, is a finite morphism. 

\begin{thm} \label{main1} Let $X\subset \mathbb P^N$ be a non-singular irreducible projective varriety, $\dim X=k\geq 1$ and $\deg X=d$. Then there is  $L\in \text{Gr}_{N,N-k-1}$ such that the linear projection $f_L:X\to\mathbb P^k$ is a cover, generic outside of codimension three, $\deg f_L=d$.
\end{thm}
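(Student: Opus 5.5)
We reduce to the classical facts about generic projections of smooth surfaces by taking general linear sections, together with a dimension count in the Grassmannian. First we reduce to $N=k+1$ or, more cleanly, work directly with the incidence variety. For $L\in \text{Gr}_{N,N-k-1}$ with $X\cap L=\emptyset$, the condition $\deg f_L=d$ is automatic: a general $k$-plane $M\supset L$ meets $X$ transversally in $d$ points. The conditions (i)--(iii) are each open and dense in a suitable sense, and we show each fails only on a proper closed subset of the open set $U=\{L: L\cap X=\emptyset\}$. The case $k=1$ is the classical statement that a general projection of a smooth curve has only simple branch points, so $\sharp f^{-1}(p)\geq d-1$. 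We therefore assume $k\geq 2$.

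The first step is to stratify the bad loci by multisecant and tangency conditions. A point $p\in\mathbb P^k$ corresponds to the $(N-k)$-plane $\Lambda_p=\langle L,p\rangle$, and $f_L^{-1}(p)=X\cap\Lambda_p$. The relevant local types are:
\begin{itemize}
\item[(a)] $\Lambda_p$ is simply tangent to $X$ at one point, giving smooth points of $B_f$ and ramification index $2$;
\item[(b)] $\Lambda_p$ is simply tangent at two points, giving $(k-2)$-nodes;
\item[(c)] $\Lambda_p$ is tangent at one point with contact of order $3$ (an $A_2$ singularity of $X\cap\Lambda_p$), giving $(k-2)$-cusps and Whitney cusp (fold--cusp) local form of $f_L$;
\item[(d)] everything worse: higher-order contact ($A_3$, $\Sigma^2$ with $\dim T_xX\cap\Lambda_p\geq 2$), triple tangency, tangency plus non-transverse intersection of branches, and so on.
\end{itemize}
In the incidence variety $I=\{(L,x_1,\dots,x_r)\}$, we compute the codimension of each condition. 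Each simple tangency imposes $k$ conditions on $\Lambda$ relative to the $x_i$. A jet of contact order $3$ or a $\Sigma^2$ point imposes one extra condition. We then project to $\mathbb P^k$, that is, to the parameter $p$. The outcome is that the image of type (a) has dimension $k-1$, types (b) and (c) have dimension $k-2$, and type (d) has dimension at most $k-3$ for general $L$. This is the Mather/Thom--Boardman style genericity of linear projections. Over the locus of types (b) and (c), the standard local computation shows that $f_L$ is locally the product of a $\mathbb C^{k-2}$ factor with the local model of a generic projection of a surface: a fold, a pair of transverse folds, or a Whitney cusp. This yields $f^*B=2R+C$ on the complement of codimension $3$, and hence everywhere, since divisors are determined in codimension $1$. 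It also yields the node and cusp local equations $z_1z_2=0$ and $z_1^2=z_2^3$ of $B_f$.

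For (ii), birationality of $R_f\to B_f$ follows because a general point of $B_f$ is of type (a), with exactly one tangency point. Irreducibility of $B_f$ will be obtained by cutting with a general $\mathbb P^{2}\subset\mathbb P^k$, or rather with a general linear subspace $\mathbb P^{N-k+2}$ containing $L$. This cuts $X$ in a smooth surface $S$, and $f_L|_S$ is a generic projection of $S$ in the classical sense. Its branch curve $B_f\cap\mathbb P^2$ is irreducible, because $R_S$ is a smooth irreducible curve: it is ample-ish, being the ramification of a generic projection, and connected by Lefschetz/Fulton--Hansen type connectedness. Irreducibility of a general plane section then forces $B_f$ to be irreducible, since $B_f$ is a hypersurface and Bertini applies. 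Alternatively, one shows that $R_f$ is irreducible directly, as the degeneracy locus of a general bundle map, using Fulton--Lazarsfeld connectedness for $k\geq 2$.

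The main obstacle is the dimension count for the bad types (d), in particular showing that points where $\Lambda_p$ is tangent at one point and also meets $X$ elsewhere non-transversally, and $\Sigma^2$ points, have image of codimension at least $3$ uniformly for a general $L$. To handle this, we first compute the codimension in the space of $(N-k)$-planes through $x$ of each bad jet condition, which is at least $k+2$. We then use the homogeneity of the Grassmannian under $PGL_{N+1}$, a Kleiman-type transversality argument, to conclude that a general $L$ avoids the excess.
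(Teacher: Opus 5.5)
Your route is genuinely different from the paper's. The paper never counts dimensions of bad strata; it argues by induction on $k$. It chooses a hyperplane $H=\mathbb P^{N-1}$ with $X_0=X\cap H$ smooth and takes, by induction, a centre $L\subset H$ that is good for $X_0$. Then $f=\text{pr}_{L\mid X}$ satisfies $f^{-1}(\Pi)=X_0$ for $\Pi=\text{pr}_L(H)$, and the lifting Theorem~\ref{gmain} does the rest. Every $(k-2)$-dimensional component $S$ of $\text{Sing}\,B_f$ meets $\Pi$ in the nodal or cuspidal part of $\text{Sing}\,B_{f\mid X_\Pi}$. The node and cusp normal forms at such points extend off $\Pi$ (Claim~\ref{cl}, Lemmas~\ref{lem2}, \ref{lem9}, \ref{lem3}). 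The node (resp.\ cusp) property then propagates to a dense open subset of $S$, via the double cover $R_f\cap f^{-1}(S)\to S$ (resp.\ Lemma~\ref{A_3}). Since $S$ is irreducible of dimension $k-2$, the exceptional set automatically has dimension $\le k-3$. So triple tangencies, swallowtails, $\Sigma^2$ points and non-transverse double folds never have to be enumerated, and the only input about projections is the case $k=2$. What your approach would buy, if completed, is that a general $L$ works (a dense open set of centres), whereas the paper produces only one.

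As written, however, there is a genuine gap in the step you yourself call the main obstacle. Kleiman's theorem on the homogeneous Grassmannian $G$ of $(N-k)$-planes only transports a codimension already established in $G$ to the Schubert variety $\{\Lambda\supset L\}\cong\mathbb P^k$. You still have to show that the locus $\mathcal B\subset G$ of planes with bad contact has codimension $\ge 3$, and your counts for this are off. Among $(N-k)$-planes through a general $x\in X$, the conditions tangency, $A_2$-contact, $A_3$-contact and $\dim(\Lambda\cap T_xX)\ge 2$ have codimensions $1,2,3,4$ respectively; the last is a Schubert condition of codimension $4$. So a $\Sigma^2$ point costs three extra conditions, not one. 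What you need is codimension $\ge 3$ among planes through $x$, i.e.\ $\ge k+3$ in $G$. Codimension $k+2$ in $G$ is exactly the cusp level and leaves only codimension $2$ once $x$ varies. You must also treat the points of $X$ where these jet conditions become dependent (e.g.\ where the second fundamental form degenerates), trading the failure of independence against the dimension of such points.

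More seriously, a one-point jet count says nothing about the multi-point strata in (d): triple tangency, a cusp together with a tangency elsewhere, two tangencies with non-transverse branches of $B_f$. These require independence of the tangency conditions at distinct points $x_1,\dots,x_r$. That includes degenerate configurations, such as points spanning less than expected or tangent spaces in special position relative to their span, and nothing in your sketch supplies this. That independence is precisely the content of Mather's multi-jet transversality theorem for generic linear projections. If you invoke it explicitly, in its complex form, the rest of your plan goes through: local forms from stable germs, birationality from type (a), and irreducibility via a general surface section. Otherwise, the paper's hyperplane-section induction is the way to avoid this bookkeeping altogether.
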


Note that in the case of $k\leq 2$ Theorem \ref{main1} is well-known and in the case of $k=2$ its modern proof  can be found in \cite{C-F}.

The following Theorem is a generalization to the arbitrary dimension of results contained in \cite{K2} and \cite{N} that related to Chisini's conjecture (\cite{Ch}) claimed that if $B$ is the discriminant of generic covers $f_i:X_i\to \mathbb P^2$, $i=1,2$, and $\max (\deg f_1,\deg f_2) \geq 5$, then $f_1$ and $f_2$ are equivalent covers. 

\begin {thm} \label{chis} Let $B\subset \mathbb P^k$ be the discriminant of  two  covers $f_i: X \to \mathbb P^k$, generic outside of codimension three, $i=1,2$, where $k>1$. Assume that either 
\begin{itemize}
\item[(1)] $\max(\deg f_1,\deg f_2)>11$, 
\end{itemize}
or
\begin{itemize}
\item[(2)] $f_1=f_{L}$ is a  linear projection and $\deg f_1\geq 5$. 
\end{itemize}
Then $f_1$ and $f_2$ are equivalent covers.
\end{thm}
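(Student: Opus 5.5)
The plan is to reduce Theorem \ref{chis} to the known two-dimensional results of \cite{K2} and \cite{N} by restricting to a generic plane, and then to extend the equivalence back over all of $\mathbb P^k$.

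\emph{Step 1: reduction to generic plane covers.} Choose a generic projective plane $\Pi\subset\mathbb P^k$. Since $\dim S_{f_i,3}\leq k-3$, a generic $\Pi$ does not meet $S_{f_1,3}\cup S_{f_2,3}$. It meets $\mathrm{Sing}\,B$ transversally, and only at points of $S_{f_i,n}\sqcup S_{f_i,c}$. By Bertini, $Y_i=f_i^{-1}(\Pi)$ is a non-singular irreducible surface; irreducibility holds because $f_i^{-1}(\Pi)$ is an ample connected divisor. Let $g_i=f_{i|Y_i}:Y_i\to\Pi$. We then check the three conditions for a generic plane cover:
\begin{itemize}
\item[(i)] $g_i^*(B\cap\Pi)=2(R_{f_i}\cap Y_i)+C_{f_i}\cap Y_i$;
\item[(ii)] $B\cap\Pi$ is irreducible, by Bertini applied to the irreducible hypersurface $B$, and $g_i$ restricted to the ramification curve is birational onto it;
\item[(iii)] the singularities of $B\cap\Pi$ are ordinary nodes and cusps, since transversal slices of $(k-2)$-nodes and $(k-2)$-cusps are nodes and cusps.
\end{itemize}
Hence $g_1,g_2$ are generic covers of $\mathbb P^2$ with the same discriminant curve $B\cap\Pi$, and $\deg g_i=\deg f_i$.

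\emph{Step 2: apply the planar results.} Under hypothesis (1), the result of \cite{K2} says that two generic plane covers with common discriminant and $\max\deg>11$ are equivalent. So we get $\psi:Y_1\to Y_2$ with $g_1=g_2\circ\psi$. Under hypothesis (2), $g_1$ is itself a generic linear projection of the surface $Y_1=X\cap\mathrm{pr}_L^{-1}(\Pi)$ of degree $\geq5$. Nemirovski's result \cite{N} (Chisini's conjecture for generic projections, via Kulikov's criterion) then gives that any generic cover with the same discriminant is equivalent to $g_1$. One point needs checking in this case: the condition that the ramification curve be of the required type, i.e.\ that $g_1$ is the restriction of a projection in the sense used in \cite{N}.

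\emph{Step 3: globalize.} Equivalence of finite covers of a normal base, unramified over $\mathbb P^k\setminus B$, is the same as conjugacy of the monodromy homomorphisms $\mu_i:\pi_1(\mathbb P^k\setminus B)\to S_{d}$. The extension over $B$ follows by normalization, since $X_i$ is the normalization of $\mathbb P^k$ in the function field of the unramified cover. By the Zariski–Lefschetz hyperplane theorem, the inclusion induces a surjection, indeed an isomorphism, $\pi_1(\Pi\setminus B)\to\pi_1(\mathbb P^k\setminus B)$ for generic $\Pi$. The monodromy of $g_i$ is $\mu_i$ composed with this map. Step 2 gives conjugacy of the monodromies of $g_1$ and $g_2$, so $\mu_1,\mu_2$ are conjugate. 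The unramified parts of $f_1,f_2$ are therefore equivalent, and taking normalizations of $\mathbb P^k$ in the corresponding function fields yields an isomorphism $\varphi:X_1\to X_2$ with $f_1=f_2\circ\varphi$.

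\emph{Main obstacle.} I expect the main obstacle to be the careful verification in Step 1 that generic plane sections preserve all generic-cover conditions, especially the birationality of $R_{f_i}\to B$ after slicing and the fact that the singular points of $B\cap\Pi$ are exactly the transversal nodes and cusps. For the latter, I would use a local analytic normal form along $S_{f,2}$ together with a dimension count for the bad locus in the Grassmannian of planes. In case (2), I also need to match precisely the hypotheses of \cite{N}.
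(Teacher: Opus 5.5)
Your proof follows the paper's route. You restrict to a generic plane $\Pi$, use Stein's theorem to turn equivalence of covers into conjugacy of monodromy, and then invoke the case $k=2$. Your direct Bertini/transversality check in Step~1 replaces the paper's passage through $ind$-generic covers. Your appeal to Zariski's Lefschetz-type theorem, i.e. surjectivity of $\iota_*\colon\pi_1(\Pi\setminus B)\to\pi_1(\mathbb P^k\setminus B)$, supplies something the paper needs but does not state when it asserts that $f_1,f_2$ are equivalent iff their restrictions to the plane are.

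The genuine error is in Step~2: you have swapped the two planar inputs.
\begin{itemize}
\item Case (1), arbitrary generic covers with $\max(\deg f_1,\deg f_2)>11$, is taken in the paper from Nemirovski \cite{N}.
\item Case (2), $f_1$ a linear projection with $\deg f_1\geq 5$, is taken from \cite{K2}.
\end{itemize}
With your attribution neither case is justified. The result of \cite{K2} needs $f_1$ to be a linear projection, so it does not cover an arbitrary pair with $\max\deg>11$. The result of \cite{N} needs degree $>11$, so it does not reach projections of degree $5,\dots,11$.

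Once the references are exchanged, the point you flag as needing a check in case (2) is just your own observation: $g_1$ is the projection from $L$ of the surface $Y_1=X\cap\mathrm{pr}_L^{-1}(\Pi)$, of degree $\deg f_1\geq 5$. With that, the argument is complete.
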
 

In Section $1$, properties of covers, generic outside of codimension three, are investigated. In Section $2$, we give another inductive definition of covers, generic outside of codimension three, and in Section $3$, Theorems 1 -- 3 are proved.

\section{Properties of covers, generic outside of codimention three} \label{propert}
\subsection{Monodromy of  covers generic outside of codimention three} \label{monod} As in the general case of finite covers of projective spaces, a cover $f:X\to\mathbb P^k$, generic outside of codimension three, induces a {\it monodromy homomorphism} $f_*:\pi_1=\pi_1(\mathbb P^k\setminus B_f,b)\to \mathbb S_{d}$  to the symmetric group $\mathbb S_{d}$ acting  on the fibre $f^{-1}(b)=\{ q_1,\dots,q_{d}\}\subset X$ over a point $b\in\mathbb P^k\setminus B_f$. Its image $G_f:=f_*(\pi_1(\mathbb P^k\setminus B_f,b))\subset \mathbb S_{d}$  is called the {\it monodromy group} of  $f$. Since $X$ is irreducible, then $X\setminus f^{-1}(B_f)$ is connected. Therefore $G_f$ acts transitively on $f^{-1}(b)$. 

It is well-known that $\pi _1$ is generated by so called {\it geometric generators} $\gamma$, i.e., elements represented by positively oriented little loops $\Gamma$ in $\mathbb P^k\setminus B_f$ around $B_f$.  It follows from properties ($\text{i}$) and ($\text{ii}$) in the definition of covers, generic outside of codimension three, that $f_*(\gamma)\in\mathbb S_d$ is a transposition for each geometric generator $\gamma\in\pi_1$.   
Therefore $G_f=\mathbb S_d$, since $G_f$ is a transitive subgroup of $\mathbb S_d$ generated by transpositions. 

The following Proposition shows that the condition of irreducibility of the curve $B_f$ in the definition of generic covers of $\mathbb P^2$ is superfluous.
\begin{prop}\label{irred} Let $f: X \to \mathbb P^2$ be a finite morphism such that  
\begin{itemize}
\item[$(\text{i})$] the singular points of its discriminant $B_f$ are ordinary nodes and ordinary cusps only, 
\item[$(\text{ii})$] $f^*(B_f)= 2R_f+C_f$, 
\item[$(\text{iii})$] for each irreducible component $B_{f,i}$ of the discriminant $B_f$ the restriction 
$$f_{\mid R_{f,i}}: R_{f,i}=R_f\cap f^{-1}(B_{f,i})\to B_{f,i} \subset B_f$$ of $f$ to $R_{f,i}$ is a birational morphism.
\end{itemize}
Then $f$ is a generic cover of the projective plane.
\end{prop}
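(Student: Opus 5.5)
The only property of a generic cover not already among the hypotheses is the irreducibility of $B_f$; conditions (i) and (ii) of the definition then follow from (ii) and (iii) of the statement. So I would argue by contradiction: assume $B_f=B_{f,1}\cup\dots\cup B_{f,m}$ with $m\geq 2$.

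\textbf{Local monodromy near a node.} By hypothesis (ii) and (iii), each geometric generator $\gamma$ around a smooth point of $B_{f,i}$ maps to a transposition $f_*(\gamma)\in\mathbb S_d$: over such a point exactly one ramification point of multiplicity $2$ lies in $R_{f,i}$, and the other preimages are unramified. Since $B_f$ is the plane curve, any two distinct components $B_{f,i}$ and $B_{f,j}$ meet, and every intersection point is a singular point of $B_f$. By (i) it is an ordinary node, since a cusp is locally irreducible. Near a node $p$, the local fundamental group of a punctured bidisc minus two transversal lines is $\mathbb Z^2$, generated by loops $\gamma_i,\gamma_j$ around the two branches. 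Their images are two commuting transpositions $t_i,t_j$, so either $t_i=t_j$ or they are disjoint.

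\textbf{Excluding $t_i=t_j$.} If $t_i=t_j=(ab)$, consider the local cover over the bidisc $U_p$. The two sheets $a,b$ are glued into a connected component $V$ of $f^{-1}(U_p)$, which is a double cover of the bidisc branched along $z_1z_2=0$, with both local monodromies nontrivial. Its normalization is $w^2=z_1z_2$, which is singular (an $A_1$ point) over $p$. Since $X$ is non-singular, $V$ would equal this normalization, a contradiction. Hence near each node joining different components the transpositions are disjoint.

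\textbf{Transitivity argument.} This is the main step, and here I expect the obstacle. Choose the base point near a node $p\in B_{f,1}\cap B_{f,2}$. The monodromy group is generated by conjugates of the transpositions for all components, and it is transitive because $X$ is irreducible. I would try to show directly that $B_f$ must be irreducible by a different route: the assignment component $\mapsto$ ramification curve. Take $R_{f,1}$, the unique ramification curve over $B_{f,1}$, and look at $f^{-1}(B_{f,2})$ near $p$. Disjointness of $t_1,t_2$ means the ramification over $B_{f,2}$ near $p$ lies on other sheets, so $R_{f,1}\cap R_{f,2}$ contains no point over $p$; the intersection $R_{f,1}\cap R_{f,2}$ is therefore empty. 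But I would then use $R_f\sim K_X-f^*K_{\mathbb P^2}=K_X+3H$, with $H=f^*(\text{line})$. I would also use the Hodge index theorem on $X$: $R_{f,i}$ is contained in $f^*B_{f,i}\sim \deg B_{f,i}\,H$, and the divisor $f^*B_{f,i}-2R_{f,i}=C_{f,i}$ is effective. Then $R_{f,1}\cdot R_{f,2}=0$ together with $R_{f,1}\cdot f^*B_{f,2}=\deg B_{f,2}\cdot H\cdot R_{f,1}>0$ forces $R_{f,1}$ to meet $C_{f,2}$. That is consistent, so intersection numbers alone may not suffice.

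\textbf{Fallback: the $\mathbb S_d$-structure.} The fallback I would actually pursue is more conceptual. The preimage of a general line $\ell$ is a smooth connected curve by Bertini, giving a generic cover $\ell'\to\ell$ whose monodromy is generated by the transpositions of all components. By the Zariski–van Kampen theorem, $\pi_1(\mathbb P^2\setminus B_f)$ is generated by the images of the loops in $\ell\setminus B_f$, with relations coming from the braid monodromy. At a node the relation is commutation, and we showed the corresponding transpositions are disjoint. At a cusp the relation is $\gamma\gamma'\gamma=\gamma'\gamma\gamma'$, so the two transpositions share exactly one letter. Tangency relations do not occur for generic $\ell$. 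The generators for one irreducible component are all conjugate in $\pi_1$, and their images form a conjugacy class of transpositions inside $G_f=\mathbb S_d$. One must show this class is all transpositions, forcing a single component. I expect this identification to be the real difficulty. I would try to prove that the set $T_i$ of transpositions coming from $B_{f,i}$ is invariant under conjugation by $G_f=\mathbb S_d$; then $T_i$ is the set of all transpositions. Two components would then share a transposition at some point of their intersection, which we excluded at nodes.
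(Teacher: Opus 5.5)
\noindent There is a genuine gap, in your final step. The earlier parts are sound: the reduction to irreducibility of $B_f$ is correct, and so is the local analysis at a node $p\in B_{f,i}\cap B_{f,j}$, $i\neq j$. There the two local transpositions commute, and they cannot coincide, since the corresponding component of $f^{-1}(U_p)$ would then be the singular double cover $w^2=z_1z_2$. This already settles $d\leq 3$, because $\mathbb S_3$ contains no two disjoint transpositions.

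For $d\geq 4$, however, the final step is a non sequitur. The fact you flag as the real difficulty is immediate. The geometric generators around $B_{f,i}$ form a single conjugacy class of $\pi_1(\mathbb P^2\setminus B_f,b)$, and $f_*$ is onto $\mathbb S_d$, so $T_i$ is the set of all transpositions. But $T_1=T_2$ only says that each transposition is the image of some loop around $B_{f,1}$ and of some other loop around $B_{f,2}$. It says nothing about the specific pair of local generators at a node $p$, joined to $b$ by a common path, and that is the only pair your node computation controls. Disjoint $t_1,t_2$ at $p$ are perfectly compatible with $T_1=T_2$, so no contradiction arises.

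\noindent The missing ingredient is a global commutation statement. You need every geometric generator around $B_{f,1}$ to commute, at least after applying $f_*$, with every geometric generator around $B_{f,>1}$, not only the pairs meeting at a common node. This cannot be read off from the local node and cusp relations of a Zariski--van Kampen presentation. (Also, contrary to your remark, tangent lines of a generic pencil do contribute relations $\gamma=\gamma'$; they just involve a single component.)

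The paper obtains the global statement from the product theorem, Theorem 1 of \cite{Ku}. Since $B_{f,1}$ and $B_{f,>1}$ meet only transversally at smooth points, one has $\pi_1(\mathbb C^2\setminus B_f)\cong\pi_1(\mathbb C^2\setminus B_{f,1})\times\pi_1(\mathbb C^2\setminus B_{f,>1})$ for $\mathbb C^2=\mathbb P^2\setminus L$ with $L$ a generic line. Granting this, your own observation finishes the proof at once. A transposition in $T_1$ would commute with every element of $T_{>1}$, that is, with all transpositions, which is impossible for $d\geq 3$. Equivalently, $\mathbb S_d$ would be generated by two elementwise commuting nontrivial subgroups. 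The case $d=2$ is excluded by your node argument.
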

\proof To prove Proposition, we only need to prove that $B_f$ is an irreducible curve. Assume that $B_f=B_{f,1}\cup B_{f,>1}$ consists of more than one irreducible component $B_{f,1}$, where $B_{f,>1}$ is the closure of $B_{f}\setminus B_{f,1}$ in $\mathbb P^2$. 

As above, it follows from the irreducibility of $X$ and property $(\text{ii})$ that the monodromy group $G_f=f_*(\pi_1(\mathbb P^2\setminus B_f,b))\subset \mathbb S_{\deg f}$ is generated by transpositions and acts transitively on the set $f^{-1}(b)$. Therefore $G_f=\mathbb S_{\deg f}$. 

On the over hand, it follows from property $(\text{i})$ that $B_{f,1}$ and $B_{f,>1}$ meet transversally at non-singular points of $B_{f,1}$ and $B_{f,>1}$. 
Let  $L\subset \mathbb P^2$ be a line meeting transversally the curve  $B_f$ at non-singular points of $B_f$.  

By Theorem 1 in \cite{Ku}, the fundamental group $$\pi_1(\mathbb C^2\setminus B_f,b)=\pi_1(\mathbb C^2\setminus B_{f,1},b)\times \pi_1(\mathbb C^2\setminus B_{f,>1},b)$$ is the direct product of the groups $\pi_1(\mathbb C^2\setminus B_{f,1},b)$ and $\pi_1(\mathbb C^2\setminus B_{f,>1},b)$, where 
$\mathbb C^2=\mathbb P^2\setminus L$.  Therefore the group $G_f=\mathbb S_{\deg f}$ is generated by the elements of its two nontrivial subgroups $G_1$ and $G_{>1}$ such that all elements of $G_1$ commute with all elements of $G_{>1}$, where $G_1$ and $G_{>1}$ are the images of the compositions of the epimorphisms $\pi_1(\mathbb C^2\setminus B_1,b)\to \pi_1(\mathbb P^2\setminus B_1,b)$ and $\pi_1(\mathbb C^2\setminus B_{>1},b)\to \pi_1(\mathbb P^2\setminus B_{>1},b)$ that are defined by imbedding of the affine surface $\mathbb C^2$ in $\mathbb P^2$ and the monodromy homomorphism $f_*$. But,  it is possible only if $\deg f=2$. The case of $\deg f=2$ is impossible also, because $X$ is a non-singular variety and in this case $B_f$ must be a non-singular curve in $\mathbb P^2$ and therefore $B_f$ must be irreducible. \qed

\subsection{Local behavior of covers, generic outside of codimension three, over the points from $\bf S_{f,2}=S_{f,c}\cup S_{f,n}$} \label{loc} 
Let $k\geq 2$. By definition of covers, generic outside of codimension three, for each point $p\in S_{f,2}\subset B_f$, there are a neighborhood $U_p$  and local coordinates $z_1,\dots,z_k$ in $U_p$ such that $z_1(p)=z_2(p)=0$ and $B_f$ is given by equation $z_1z_2 = 0$ if $p\in S_{f,n}$ and 
$z^2_1 = z_2^3$ if $p\in  S_{f,c}$. 

Choose  positive  $\varepsilon\ll 1$ such that 
$$V_{p,\varepsilon}=\{ (z_1,\dots, z_k)\, \mid\, |z_1|+|z_2|< \varepsilon\, \text{and}\, |z_i|< \varepsilon\, \text{for}\, i\geq 3\}\subset U_p.$$
The neighbourhood $V_{p,\varepsilon}=\mathcal B_{\varepsilon}\times\mathcal D_{\varepsilon}^{k-2}$ is the product of a ball 
$$\mathcal B_{\varepsilon}=\{ (z_1, z_k)\in\mathbb C^2\, \mid\, |z_1|+|z_2|< \varepsilon\}$$ 
in $\mathbb C^2$ and a ($k-2$)-disc $\mathcal D_{\varepsilon}^{k-2}=\{ (z_3,\dots, z_k)\in\mathbb C^{k-2}\, \mid\, |z_i|< \varepsilon\}$ in $\mathbb C^{k-2}$ (if $k=2$ then $\mathcal D^0_{\varepsilon}$ is a point). Then $B_f\cap V_{p,\varepsilon}=B_{\varepsilon}\times \mathcal D^{k-2}_{\varepsilon}$ is the product of the cusp $B_{\varepsilon}=\{ (z_1,z_2)\in\mathcal B_{\varepsilon}\, \mid\, z_1^2=z_2^3\}$ if $p\in S_{f,c}$ (resp., the node  
$B_{\varepsilon}=\{ (z_1,z_2)\in\mathcal B_{\varepsilon}\, \mid\, z_1z_2=0\}$ if $p\in S_{f,n}$) and the ($k-2$)-disc $\mathcal D^{k-2}_{\varepsilon}$.

If $p\in S_{f,2}$ then $V_{p,\varepsilon}\setminus B_f=(\mathcal B_{\varepsilon}\setminus B_{\varepsilon})\times \mathcal D_{\varepsilon}$ and it is well known that the local fundamental group $\pi_1^{loc}(B_f,p):=\pi_1(V_{p,\varepsilon}\setminus B_f,b=b'\times b^{\prime\prime})=\pi_1(\mathcal B_{\varepsilon}\setminus B_{\varepsilon},b')$ does not depend on $\varepsilon\ll 1$ and it is generated by two geometric generators $\gamma_1$ and $\gamma_2$. In addition, if $p\in S_{f,c}$, then $\pi_1^{loc}(B_f,p)=\text{Br}_3=\langle \gamma_1,\gamma_2\, \mid\, \gamma_1\gamma_2\gamma_1=\gamma_2\gamma_1\gamma_2\rangle$ is the braid group of 3-string braids and if $p\in S_{f,n}$, then 
$\pi_1^{loc}(B_f,p)=\mathbb Z\times\mathbb Z$ is the free abelian group generated by $\gamma_1$ and $\gamma_2$.

Denote by $\iota_*:\pi_1^{loc}(B_f,p)=\pi_1(V_{p,\varepsilon}\setminus B_f,b)\to \pi_1(\mathbb P^2\setminus B_f,b)$ a homomorphism defined  
by  imbedding $\iota: V_{p,\varepsilon}\hookrightarrow \mathbb P^k$. The {\it local monodromy group} $G_{f,p}:=\text{im}\, f_{\mid W_p*}:=\text{im}\, f_*\circ \iota_*\subset G_f=\mathbb S_d$ of $f$ at a point $p\in S_{f,2}$ is defined by the cover $f_{\mid W_p}:W_p=f^{-1}(V_{p,\varepsilon})\to V_{p,\varepsilon}$. Therefore $G_{f,p}$ is a nontrivial  subgroup of $\mathbb S_d$, since $f$ is branched at $p$. 

The group $G_{f,p}$ is generated by two transpositions $f_{*}(\gamma_1)$ and $f_*(\gamma_2)$. 
Therefore $G_{f,p}=\mathbb S_3$ if $p\in S_{f,c}$ and  $G_{f,p}=\mathbb S_2\times\mathbb S_2$ if $p\in S_{f,n}$, since $X$ is non-singular at the points  of $f^{-1}(p)$. Hence, $W_p=\sqcup_{i=1}^{d-2}W_{p,i}\subset X$ is a disjoint union of $d-2$ non-singular connected neighborhoods such that 
if $p\in S_{f,c}$ then $\deg f_{\mid W_{p,1}}=3$ and $\deg f_{\mid W_{p,i}}=1$ (i.e., $f_{\mid W_{p,i}}:W_{p,i}\to  V_{p,\varepsilon}$ are biholomorphic mappings) for $i \geq 2$, and if $p\in S_{f,n}$ then $\deg f_{\mid W_{p,i}}=2$ for $i\leq 2$ and $\deg f_{\mid W_{p,i}}=1$ for $ i \geq 3$.
In addition, since $V_{p,\varepsilon}=\mathcal B_{\varepsilon}\times\mathcal D_{\varepsilon}^{k-2}$ and $B_f\cap V_{p,\varepsilon}=B_{\varepsilon}\times \mathcal D^{k-2}_{\varepsilon}$, then $W_p=W'_p\times \mathcal D^{k-2}_{\varepsilon}$ and $f_{\mid W_p}=f_{\mid \mathcal W'_p}\times\text{id}$, where 
$W_p'=f_p^{-1}(\mathcal B_{\varepsilon})$ (here and below, $\mathcal B_{\varepsilon}$ is the fiber over $b^{\prime\prime}\in \mathcal D_{\varepsilon}^{k-2}$ of the projection  $\mathcal B_{\varepsilon}\times\mathcal D_{\varepsilon}^{k-2}\to \mathcal D_{\varepsilon}^{k-2}$)  and $f_{\mid \mathcal W_p'}$ is the restriction of the cover $f$ to $W_p'$ that is defined by the monodromy homomorphism 
$f_{\mid W_p*}:\pi_1(\mathcal B_{\varepsilon}\setminus B_{\varepsilon},b')=\pi_1(V_p\setminus B_f,b)\to\mathbb S_d$,  
and $\text{id}:\mathcal D^{k-2}_{\varepsilon}\to\mathcal D^{k-2}_{\varepsilon}$ is the identical mapping. Denote $W_{p,i}'=W_p'\cap W_{p,i}$. We have 
$\deg f_{\mid W_{p,i}'}=\deg f_{\mid W_{p,i}}$. 

The following Lemma directly follows from obvious Lemma \ref{lem9}.
\begin{lem} \label{lem2}
If $p\in S_{f,n}$, then for $i\in\{ 1,2\}$ there are local coordinates $w_1,w_2$ in $W_{p,i}'$ such that $f_{\mid W_{p,i}'}: W'_{p,i}\to\mathcal B_{\varepsilon}$ is given by functions  $z_i=w_i$ and $z_{\overline i}=w_0^2$, where $\overline i\in \{ 1,2\} \setminus\{ i\}$. 
\end{lem}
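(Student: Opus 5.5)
The idea is to reduce to the classical structure of a double cover branched along a smooth curve germ, using the decomposition already set up before the statement. Fix $p\in S_{f,n}$ and $i\in\{1,2\}$. By the discussion above, $f_{\mid W'_{p,i}}:W'_{p,i}\to\mathcal B_{\varepsilon}$ is a connected cover of degree $2$ (unramified outside $B_\varepsilon$), and $W'_{p,i}$ is non-singular, since it is a slice of the non-singular $W_{p,i}=W'_{p,i}\times\mathcal D^{k-2}_\varepsilon$. Its monodromy is the restriction of $f_{\mid W_p*}$ to the two-point orbit. Recall that $G_{f,p}=\mathbb S_2\times\mathbb S_2$ is generated by the two disjoint transpositions $f_*(\gamma_1)$ and $f_*(\gamma_2)$, and each factor acts on the fibre of exactly one $W'_{p,j}$ with $j\le 2$. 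So on the fibre of $W'_{p,i}$ one geometric generator acts as the transposition and the other acts trivially. Renumbering the two sheets if necessary, the loop around the branch $\{z_{\overline i}=0\}$ acts nontrivially and the loop around $\{z_i=0\}$ acts trivially.

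Next I show that $f_{\mid W'_{p,i}}$ is branched only over the line $\{z_{\overline i}=0\}$. Since the monodromy around $\{z_i=0\}\setminus\{0\}$ is trivial, the cover extends, by Riemann's extension theorem and the normality of $W'_{p,i}$, to an unramified double cover over $\mathcal B_\varepsilon\setminus\{z_{\overline i}=0\}$. So $f_{\mid W'_{p,i}}$ is a degree-$2$ cover of the ball branched exactly along the smooth germ $\{z_{\overline i}=0\}$.

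Now compare it with the model $g:(w_1,w_2)\mapsto (z_i=w_i,\ z_{\overline i}=w_{\overline i}^2)$, where I read the exponent $w_0^2$ in the statement as $w_{\overline i}^2$. Over $\mathcal B_\varepsilon\setminus\{z_{\overline i}=0\}$ both covers are connected double covers. The fundamental group of this set is $\mathbb Z$, generated by the loop around $\{z_{\overline i}=0\}$, and both covers correspond to its unique index-two subgroup. Hence there is a biholomorphism of the unramified parts commuting with the projections. Both total spaces are normal and finite over the ball, so this isomorphism extends across the branch locus; equivalently, both are the normalization of $\mathcal B_\varepsilon$ in the same function field extension $\mathbb C(\mathcal B_\varepsilon)(\sqrt{z_{\overline i}})$. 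Pulling back $w_1,w_2$ through this isomorphism gives the required local coordinates on $W'_{p,i}$.

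The step I expect to need the most care is this extension across the codimension-one branch locus, together with the check that $g$ is itself a normal (indeed smooth) model. The standard route is the Grauert--Remmert uniqueness of normal finite branched covers with prescribed unramified part, which is presumably the ``obvious Lemma~\ref{lem9}'' that the paper invokes. Smoothness of $W'_{p,i}$ is guaranteed, so no singularities need separate treatment.
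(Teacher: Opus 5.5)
Your proof is correct. It is a genuine argument where the paper gives essentially none: the paper only asserts that Lemma~\ref{lem2} ``directly follows from obvious Lemma~\ref{lem9}''.

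Lemma~\ref{lem9} is not the Grauert--Remmert uniqueness theorem you guessed. It is a transversality criterion: for a double cover $g:Y\to\mathcal D^k_\varepsilon$ with $Y$ smooth, $g^{-1}(Z)$ is smooth iff the hypersurface $Z$ meets $B_g$ transversally.

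The route the paper presumably has in mind is algebraic. A double cover with smooth total space has smooth branch divisor (the fact quoted at the start of the proof of Theorem~\ref{thm2}). So the branch locus of $f_{\mid W_{p,i}}$, $i\le 2$, must be exactly one smooth branch of $\{z_1z_2=0\}$. The standard local form $w^2=h$ of a double cover branched along a smooth divisor $\{h=0\}$ then gives the coordinates.

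You argue topologically instead:
\begin{itemize}
\item the paper's computation $G_{f,p}=\mathbb S_2\times\mathbb S_2$ tells you which branch each sheet ramifies over;
\item trivial monodromy plus normality shows the other branch is not branch locus;
\item $\pi_1(\mathcal B_\varepsilon\setminus\{z_{\overline i}=0\})\cong\mathbb Z$ together with uniqueness of normal finite covers identifies the cover with the model.
\end{itemize}
This needs neither the normal form for double covers nor Lemma~\ref{lem9}.

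The extension step you flag is routine. Pull back the model coordinates through the isomorphism of unramified parts. Then $w_i=z_i$, and $w_{\overline i}$ with $w_{\overline i}^2=z_{\overline i}$, are bounded holomorphic functions off a hypersurface in the smooth surface $W'_{p,i}$, so they extend. This gives a finite, generically one-to-one holomorphic map onto the smooth model, which is therefore biholomorphic.

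Both arguments also apply verbatim to $W_{p,i}\to V_{p,\varepsilon}$ itself, since $\pi_1(V_{p,\varepsilon}\setminus\{z_{\overline i}=0\})\cong\mathbb Z$ as well. Working there would remove your reliance on the product decomposition $W_p=W'_p\times\mathcal D^{k-2}_\varepsilon$, which the paper asserts without proof before the lemma, for the smoothness of $W'_{p,i}$.
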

 
\begin{lem}\label{lem9} Let $g: Y\to\mathcal D_{\varepsilon}^k$, $k>2$, be a two-sheeted cover  with the discriminant $B_g\subset \mathcal D_{\varepsilon}^k$, where $Y$ is a non-singular variety and let $Z\subset \mathcal D_{\varepsilon}^k$ be a non-singular variety, $\dim Z=k-1$. Then $g^{-1}(Z)$ is non-singular if and only if $Z$ and $B_{g}$ meet transversally at each point $q\in B_g\cap Z$.
\end{lem}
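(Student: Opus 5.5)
We need to prove Lemma \ref{lem9}: for a two-sheeted cover $g:Y\to\mathcal D^k_\varepsilon$ with $Y$ non-singular and a smooth hypersurface $Z$, the preimage $g^{-1}(Z)$ is non-singular exactly when $Z$ is transversal to $B_g$ at every point of $B_g\cap Z$.

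The plan is to reduce to a local normal form for $g$ and then compute $g^{-1}(Z)$ in coordinates. A double cover is Galois, so the deck involution $\sigma$ of $Y$ extends holomorphically by normality of $Y$. At a point $y\in Y$ over $q\in B_g$, $\sigma$ fixes $y$. By Cartan's linearization of a finite group action near a fixed point, $\sigma$ is linear in suitable local coordinates $w_1,\dots,w_k$, hence diagonal with eigenvalues $\pm1$. The quotient $Y/\sigma$ is locally $\mathcal D^k_\varepsilon$, which is smooth, and $g$ is ramified at $y$. By Chevalley–Shephard–Todd, $\sigma$ is then a reflection: $\sigma(w_1,w')=(-w_1,w')$. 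So locally $g(w_1,\dots,w_k)=(w_1^2,w_2,\dots,w_k)$ in coordinates $z_1=w_1^2$, $z_i=w_i$ on the base. In particular $B_g$ is non-singular, given locally by $z_1=0$, and the ramification divisor is $w_1=0$.

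With this normal form in hand, fix $q\in B_g\cap Z$ and write $Z=\{h=0\}$ with $dh(q)\neq0$. Then $g^{-1}(Z)$ is given near $y=g^{-1}(q)$ by $H(w)=h(w_1^2,w_2,\dots,w_k)=0$. Its differential at $y$ (where $w_1=0$) is
$$dH(y)=\sum_{i\ge2}\frac{\partial h}{\partial z_i}(q)\,dw_i,$$
because $\partial H/\partial w_1=2w_1\,\partial h/\partial z_1$ vanishes there. Hence $g^{-1}(Z)$ is smooth at $y$ if and only if some $\partial h/\partial z_i(q)$ with $i\ge2$ is nonzero. This holds exactly when $dh(q)$ is not proportional to $dz_1$, that is, when $T_qZ\neq T_qB_g$, which is transversality of $Z$ and $B_g$ at $q$.

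It remains to handle points of $g^{-1}(Z)$ lying over $Z\setminus B_g$. There $g$ is étale, so $g^{-1}(Z)$ is smooth automatically. Combining the two cases gives the equivalence.

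The main obstacle I expect is justifying the normal form, namely the linearization step and the fact that a smooth quotient forces a reflection. If I want to avoid citing Chevalley's theorem, I will argue instead that the fixed locus of $\sigma$ must be a divisor. Otherwise the fixed locus has codimension $\ge2$, and then $\mathcal D^k_\varepsilon\setminus B_g$ would have a connected double cover near $q$ while its complement has codimension $\ge2$. That contradicts the simple connectivity of a small ball minus a codimension-$2$ set. Once the fixed locus is a divisor, $\sigma$ acts as $-1$ on its normal direction, which gives the diagonal form above.
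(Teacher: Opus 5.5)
Your proposal is correct. The paper gives no proof of this lemma (it is only called ``obvious''), and your argument is presumably the standard one it has in mind: the local normal form $g=(w_1^2,w_2,\dots,w_k)$ at each ramification point, followed by differentiating $H=h(w_1^2,w_2,\dots,w_k)$. Your derivation of that normal form from the deck involution, via Cartan linearization and smoothness of the quotient, is sound. So is your codimension-two alternative to Chevalley--Shephard--Todd.

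One convention should be stated explicitly. In the ``only if'' direction you use that $g^{-1}(Z)$ is the complex subspace of $Y$ cut out by $H=h\circ g$, so that $dH(y)=0$ forces a singularity. If $g^{-1}(Z)$ is read as a reduced set, the statement is false. In the model $g=(w_1^2,w_2,\dots,w_k)$ take $Z=B_g=\{z_1=0\}$: then $H=w_1^2$, and the reduced preimage $\{w_1=0\}$ is smooth, although $Z$ is nowhere transversal to $B_g$.

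So either fix the analytic-preimage convention, or assume that $Z$ contains no open subset of $B_g$. In the latter case, at a point of tangency write $Z=\{z_1=\phi(z_2,\dots,z_k)\}$ with $\phi\not\equiv 0$. Then $w_1^2-\phi(w_2,\dots,w_k)$ is square-free, the two readings agree, and your computation goes through unchanged. This second situation is the one in the paper's applications, where the hyperplane $\Pi$ is never a component of $B_f$.
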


A proof of the  following Theorem can be found in \cite{K_G}. 
\begin{thm}\label{T} Let $(W,o')$ and $(V,o)$ be connected germs of non-singular 
surfaces and $f: (W,o')\to (V,o)$ a finite holomorphic map, $\deg f=3$. Then 
\begin{itemize}  \item[$(1)$]  there are  local coordinates $w_1,w_2$ in $(W,o')$ and $z_1,z_2$ in $(V,o)$, and a non-negative integer $n\in \mathbb Z_{\geq 0}$ such that $f$ is one of the covers $f_n:(W,o')\to (V,o)$ given by
$$\begin{array}{l}
z_1=w_1^3-3nw_2^nw_1, \qquad n\in \mathbb Z_{\geq 0}, \\
z_2=w_2; \end{array} $$ 
 \item[$(2)$]  if  $n\geq 1$, then the curve $(R_{f_n},o')$ is given by equation $w_1^2-nw_2^n=0$, the curve $(C_{f_n},o')$ is given by equation $w_1^2-4nw_2^n=0$, and the curve $(B_{f_n},o)$ is given by equation $z_1^2- 4nz_2^{3n}=0$; 
\item[$(3)$] the curves $R_{f_n}$ and $C_{f_n}$ are nonsingular at the point $o'$  if and only if $n=1$ and in this case  the intersection number $(R_{f_1},C_{f_1})_p=2$. \end{itemize}
\end{thm}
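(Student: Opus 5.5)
The plan is to reduce the classification to the versal unfolding of the one-variable singularity $w^3$, and then read off $(2)$ and $(3)$ from the resulting normal form by direct computation.

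\textbf{Step 1: the local algebra is curvilinear.} Both germs are non-singular and $f$ is finite, so $\mathcal O_{W,o'}$ is a free $\mathcal O_{V,o}$-module of rank $3$ (miracle flatness). The fibre algebra $A=\mathcal O_{W,o'}/\mathfrak m_{V,o}\mathcal O_{W,o'}$ is therefore a local $\mathbb C$-algebra of length $3$. Up to isomorphism it is either $\mathbb C[t]/(t^3)$ or $\mathbb C[x,y]/(x,y)^2$.

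The second case is impossible. In that case the two pulled-back coordinates $f^*z_1,f^*z_2$ would generate the ideal $\mathfrak m_{W,o'}^2$. But this ideal needs three generators, since $\dim \mathfrak m^2/\mathfrak m^3=3$.

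So $A$ is curvilinear. Hence $\mathfrak m_{W,o'}$ is generated by $f^*\mathfrak m_{V,o}$ and one further function $w$. Moreover $d(f^*z_1),d(f^*z_2),dw$ span a space of dimension at most $2$, and $w\notin \mathfrak m^2$. Therefore, after a linear change of $z_1,z_2$, we may take $(w_1,w_2)=(w,f^*z_2)$ as coordinates on $W$. In these coordinates $f$ becomes $(w_1,w_2)\mapsto (g(w_1,w_2),w_2)$, where $g(w_1,0)$ has a zero of order exactly $3$ at $0$.

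\textbf{Step 2: normal form via the versal unfolding.} View $g$ as a one-parameter unfolding, with parameter $w_2$, of the germ $w_1\mapsto g(w_1,0)$. The latter is right-equivalent to $w_1^3$.

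The $\mathcal A$-versal (indeed $\mathcal R^+$-versal) unfolding of $w^3$ is $w^3+sw$. The unfolding theorem then gives coordinate changes of the following forms:
\begin{itemize}
\item $w_1\mapsto \psi(w_1,w_2)$ on the source, preserving $w_2$;
\item $z_1\mapsto \varphi(z_1,z_2)$ on the target.
\end{itemize}
After these changes, $z_1=w_1^3+s(w_2)w_1$ for some holomorphic $s$ with $s(0)=0$.

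If $s\equiv0$, this is the case $n=0$. Otherwise $s=-3n\,w_2^n u(w_2)$ with $u(0)\neq0$. We then reparametrize the base coordinate $z_2=w_2$ simultaneously on source and target, replacing $w_2$ by $w_2u(w_2)^{1/n}$, and absorb the unit. This yields the form in $(1)$.

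I expect this step to be the main obstacle. The delicate point is to justify that the source and target coordinate changes can be taken fibred over $z_2$. This means using the relative (parametrized) version of the versality theorem, rather than a pointwise normal form for each $w_2$.

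\textbf{Step 3: the curves $R$, $C$, $B$ and part $(3)$.} With $n\geq1$ these are direct computations.

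The Jacobian is $3w_1^2-3nw_2^n$, so $R_{f_n}=\{w_1^2=nw_2^n\}$.

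On $R_{f_n}$, write $a^2=nw_2^n$. Then $z_1=w_1^3-3a^2w_1=\mp 2a^3$, which gives $z_1^2=4a^6=4n^3z_2^{3n}$. This is the stated $B_{f_n}$ up to rescaling $z_1$.

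Next, $z_1-2a^3=(w_1+a)^2(w_1-2a)$. So the residual component of $f^*B_{f_n}=2R_{f_n}+C_{f_n}$ consists of the points with $w_1=\pm2a$, that is, $C_{f_n}=\{w_1^2=4nw_2^n\}$.

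Finally, both curves $R_{f_n}$ and $C_{f_n}$ are smooth at $o'$ if and only if $n=1$. In that case $R_{f_1}=\{w_2=w_1^2\}$ and $C_{f_1}=\{w_2=w_1^2/4\}$ meet with intersection number $\dim\mathbb C\{w_1\}/(\tfrac34 w_1^2)=2$.
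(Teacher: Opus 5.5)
The paper does not prove this theorem; it refers to \cite{K_G}. So your argument has to stand on its own, and it does.

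Step 1 is correct. Flatness makes the fibre algebra have length $3$, and it must be $\mathbb C[t]/(t^3)$ because $\mathfrak m^2$ is not $2$-generated. This puts $f$ in the form $(g(w_1,w_2),w_2)$ with $\mathrm{ord}\, g(w_1,0)=3$.

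In Step 2, the point you flag as delicate is not an extra obstacle. The versality theorem for unfoldings is, by its nature, a statement about coordinate changes fibred over the parameter. Since $T\mathcal R_e(w^3)=(w^2)$ and $\mathcal O_w=(w^2)+\mathbb C\cdot 1+\mathbb C\cdot w$, every unfolding of $w_1^3$ can be written as $g=\psi^3+s(w_2)\psi+c(w_2)$. Here $(w_1,w_2)\mapsto(\psi,w_2)$ is a local biholomorphism, and $z_1\mapsto z_1-c(z_2)$ is the target change. Cite it in this form, with the extended tangent space, so that $\psi(0,w_2)$ need not vanish.

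The closest technique in the paper is the proof of Lemma \ref{lem3}. There one writes $\mathcal O_W=\mathcal O_V\oplus\mathcal O_Vw_1\oplus\mathcal O_Vw_1^2$, depresses the cubic relation for $w_1$ by a Tschirnhausen shift, and takes its coefficients as new target coordinates. That route is elementary, but it closes immediately only when the coefficient of $w_1$ has non-zero differential, i.e.\ the case $n=1$. For $n\ge2$ that coefficient may still depend on $z_1$, and removing this dependence is exactly what versality gives you, uniformly in $n$. Your route costs a black-box theorem and buys the full classification.

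Two small corrections.

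First, in Step 1, ``$w\notin\mathfrak m^2$'' alone does not make $w$ and $f^*z_2$ independent in $\mathfrak m/\mathfrak m^2$. What you need is that $f^*z_1,f^*z_2,w$ span this $2$-dimensional space, while $f^*z_1,f^*z_2$ are dependent in it (otherwise $A=\mathbb C$).

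Second, in Step 3 your computation $z_1^2=4n^3z_2^{3n}$ is correct, but ``up to rescaling $z_1$'' does not reconcile it with the statement. Rescaling $z_1$ destroys the normal form $(1)$. In the coordinates of $(1)$, the branch curve is the zero set of the discriminant $27(4n^3z_2^{3n}-z_1^2)$ of $T^3-3nz_2^nT-z_1$. For $n\ge2$ this is a different curve from $\{z_1^2=4nz_2^{3n}\}$. The coefficient $4n$ in the statement is therefore a misprint for $4n^3$. It is harmless here, since only $n=1$ is used later (Lemma \ref{lem1}).
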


The following Lemma directly follows from Theorem \ref{T}.
\begin{lem} \label{lem1}
If $p\in S_{f,c}$, then there are local coordinates $w_1,w_2$ in $W_{p,1}'$ such that $f_{\mid W_{p,1}'}: W'_{p,1}\to\mathcal B_{\varepsilon}$ is given by functions  $z_1=-\frac{1}{2}(w_1^3-3w_2w_1)$ and $z_2=w_2$.
\end{lem}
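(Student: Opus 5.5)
The plan is to apply Theorem \ref{T} to the degree-three piece $f_{\mid W'_{p,1}}:W'_{p,1}\to\mathcal B_{\varepsilon}$ and then pin down the integer $n$ and the normalization of the coordinates. By the discussion preceding Lemma \ref{lem2}, for $p\in S_{f,c}$ the connected component $W'_{p,1}$ is a non-singular connected germ of a surface. The map $f_{\mid W'_{p,1}}$ is finite of degree $3$ onto the germ $(\mathcal B_{\varepsilon},p')$, where $p'=(0,0)$, and $f^{-1}(p)\cap W'_{p,1}$ consists of a single point $o'$. So Theorem \ref{T}(1) applies. It gives local coordinates $w_1,w_2$ at $o'$, coordinates $\tilde z_1,\tilde z_2$ at $p'$, and $n\geq 0$ with $\tilde z_1=w_1^3-3nw_2^nw_1$ and $\tilde z_2=w_2$.

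Next I determine $n$. The cover $f_{\mid W'_{p,1}}$ is branched with monodromy group $\mathbb S_3$, so $n\neq 0$; for $n=0$ the map is a cyclic triple cover with monodromy $\mathbb Z/3$. Hence $n\geq 1$. By Theorem \ref{T}(2), the discriminant is then $\tilde z_1^2-4n\tilde z_2^{3n}=0$. On the other hand, the discriminant is the germ of $B_{\varepsilon}$, an ordinary cusp $z_1^2=z_2^3$, and in particular an irreducible curve with Milnor number $2$ (multiplicity $2$ and $\delta=1$). The curve $\tilde z_1^2=4n\tilde z_2^{3n}$ has $\delta=\lfloor 3n/2\rfloor$, so analytic equivalence of the two germs forces $3n=3$, i.e.\ $n=1$. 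This matches Theorem \ref{T}(3) and condition (i), $f^*B_f=2R_f+C_f$.

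With $n=1$ we have $\tilde z_1=w_1^3-3w_2w_1$, $\tilde z_2=w_2$, and discriminant $\tilde z_1^2=4\tilde z_2^3$. The last step is to adjust the coordinates so the discriminant becomes exactly the equation $z_1^2=z_2^3$ used to define $V_{p,\varepsilon}$. Setting $z_1=-\tfrac12\tilde z_1$ and $z_2=\tilde z_2$ gives $z_1^2=\tfrac14\tilde z_1^2=\tilde z_2^3=z_2^3$, which is the required form $z_1=-\tfrac12(w_1^3-3w_2w_1)$, $z_2=w_2$. The sign is harmless, since $-1$ can also be absorbed by $w_1\mapsto -w_1$.

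The main obstacle is compatibility of coordinate systems on the base. Theorem \ref{T} produces its own coordinates $\tilde z$, whereas the lemma is stated in the coordinates $z_1,z_2$ fixed on $U_p$, in which $B_f$ is $z_1^2=z_2^3$. One has to argue that any two coordinate systems in which the cusp has the normal form $z_1^2=z_2^3$ can be related by a change of coordinates on the base. Then, after replacing $(w_1,w_2)$ by the pullback of the correspondingly changed coordinates, the formula keeps its shape; this uses that the stated form of Theorem \ref{T} is preserved under coordinate change upstairs. The simplest route is to note that the lemma only asserts the existence of some base coordinates in which $B_{\varepsilon}$ is a standard cusp. So one may redefine $z_1,z_2$ on $V_{p,\varepsilon}$ to be $-\tfrac12\tilde z_1,\tilde z_2$, and the product structure $V_{p,\varepsilon}=\mathcal B_{\varepsilon}\times\mathcal D^{k-2}_{\varepsilon}$ is unaffected.
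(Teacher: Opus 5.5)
Your proposal is correct and follows the paper's route: the paper's entire proof is that the lemma ``directly follows from Theorem \ref{T}'', and you supply the steps it leaves implicit. These are $n\neq 0$ because the local monodromy is $\mathbb S_3$ rather than $\mathbb Z/3$, $n=1$ by comparing the germ $\tilde z_1^2=c\,\tilde z_2^{3n}$ with an ordinary cusp, and then the rescaling $z_1=-\frac12\tilde z_1$.

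One small caution about your remark on ``pulling back'' a base coordinate change. That does not by itself preserve the normal form, since $w_1$ is not a pullback from the base. If you want the formula in the originally fixed $z_1,z_2$, you should instead lift the base biholomorphism to $W'_{p,1}$. This is possible because an epimorphism $\text{Br}_3\to\mathbb S_3$ sending geometric generators to transpositions is unique up to conjugation. Otherwise, as you say, redefining the base coordinates is enough for how the lemma is used later.
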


Consider the intersection $V_{p,\varepsilon}\cap S_{f,2}$. If $p\in S_{f,c}$ then $V_{p,\varepsilon}\cap S_{f,2}=o\times \mathcal D_{\varepsilon}$, where 
$o=(0,0)\in \mathcal B_{\varepsilon}$, and $f^{-1}(V_{p,\varepsilon}\cap S_{f,c})\cap W_{p}=R_{c,1}\sqcup(\sqcup_{i=2}^{d-2}C_{c,i})$, where $R_{c,1}\subset W_{p,1}\cap R_f$ and  $C_{c,i}\subset W_{p,i}\cap C_f$. It is easy to see that $f_{\mid\, R_{c,1}}:R_{c,1}\to V_{p,\varepsilon}\cap S_{f,c}$ is one-to-one map. Respectively,  $f^{-1}(V_{p,\varepsilon}\cap S_{f,n})\cap W_{p}=(R_{n,1}\sqcup R_{n,2})\sqcup(\sqcup_{i=3}^{d-2}C_{n,i})$, where $R_{n,i}\subset W_{p,i}\cap R_f$ for $i=1,2$ and  $C_{n,i}\subset W_{p,i}\cap C_f$ for $i\geq 3$. It is easy to see that 
$f_{\mid\, R_{n,i}}:R_{n,i}\to V_{p,\varepsilon}\cap S_{f,n}$ is one-to-one map for $i=1,2$. Cosequently, we have the following 
\begin{claim} \label{cl} The intersections $R_{f,c}=f^{-1}(S_{f,c})\cap R_f$ and  $R_{f,n}=f^{-1}(S_{f,n})\cap R_f$ are non-singular quasi-projective varieties, $f_{\mid\, R_{f,c}}:R_{f,c}\to S_{f,c}$ is an isomorphism and $f_{\mid\, R_{f,n}}:R_{f,n}\to S_{f,n}$ is a non-ramified morphism, 
$\deg f_{\mid\, R_{f,n}}=2$.
\end{claim}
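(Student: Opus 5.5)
The claim is local over $S_{f,2}$, so I will prove it one point at a time, using the product neighbourhoods $V_{p,\varepsilon}=\mathcal B_{\varepsilon}\times\mathcal D^{k-2}_{\varepsilon}$ and the splitting $W_p=W'_p\times\mathcal D^{k-2}_{\varepsilon}$, $f_{\mid W_p}=f_{\mid W'_p}\times\text{id}$, described above. Two preliminary facts are needed.

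First, $S_{f,c}$ and $S_{f,n}$ are themselves non-singular of dimension $k-2$. In the chosen coordinates $S_{f,2}\cap V_{p,\varepsilon}=o\times\mathcal D^{k-2}_{\varepsilon}$, which is a smooth $(k-2)$-disc. Since being a cusp point or a node point is determined by the local analytic type, the whole disc lies in $S_{f,c}$ (resp. $S_{f,n}$).

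Second, $R_f\cap W_{p,i}=\emptyset$ whenever $\deg f_{\mid W_{p,i}}=1$, because $f$ is biholomorphic on those sheets. Consequently $f^{-1}(S_{f,c})\cap R_f\cap W_p=R_{c,1}$ and $f^{-1}(S_{f,n})\cap R_f\cap W_p=R_{n,1}\sqcup R_{n,2}$.

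For $p\in S_{f,c}$ I use Lemma \ref{lem1}: on $W'_{p,1}$ the map is $z_1=-\frac12(w_1^3-3w_2w_1)$, $z_2=w_2$. The Jacobian vanishes on $R=\{w_1^2=w_2\}$, a smooth curve, and $f^{-1}(o)\cap W'_{p,1}=\{w_1^3-3w_2w_1=0,\ w_2=0\}=\{w_1=w_2=0\}$. So $R_{c,1}=\{(0,0)\}\times\mathcal D^{k-2}_{\varepsilon}$ in the coordinates $(w_1,w_2,z_3,\dots,z_k)$ of $W_{p,1}$. This is a smooth $(k-2)$-disc, and $f$ restricted to it is the identity $(0,0,z'')\mapsto(0,0,z'')$ of $\mathcal D^{k-2}_{\varepsilon}$. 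Hence $f_{\mid R_{f,c}}$ is a local biholomorphism onto $S_{f,c}$.

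To get a global isomorphism I also need injectivity. Over each $p$ exactly one point of $f^{-1}(p)$ lies in $R_f$, namely the one in $W_{p,1}$, since the other sheets are unramified. So $f_{\mid R_{f,c}}$ is a bijective local biholomorphism onto the non-singular variety $S_{f,c}$, hence an isomorphism. As a morphism of quasi-projective varieties it is a bijective étale morphism onto a normal variety, which is therefore an isomorphism.

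For $p\in S_{f,n}$ I use Lemma \ref{lem2}: on $W'_{p,i}$ the map is $z_i=w_i$, $z_{\overline i}=w_{\overline i}^2$. The preimage of $o$ is the single point $w=0$, and the ramification curve is $\{w_{\overline i}=0\}$. So $R_{n,i}=\{0\}\times\mathcal D^{k-2}_{\varepsilon}$, which is smooth and mapped identically onto $o\times\mathcal D^{k-2}_{\varepsilon}$. Thus $R_{f,n}$ is non-singular and $f_{\mid R_{f,n}}$ is a local biholomorphism, i.e. unramified. Each fibre consists of exactly two points, one in $W_{p,1}$ and one in $W_{p,2}$, so the morphism is finite étale of degree $2$.

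The only point needing care is that $R_{f,c}$ and $R_{f,n}$ are quasi-projective, i.e. locally closed in $X$. This holds because $S_{f,c}$ and $S_{f,n}$ are locally closed: $S_{f,2}$ is open in $\text{Sing}\,B_f$, and the cusp and node loci are open and closed in $S_{f,2}$ by the local product structure. Properness (finiteness) of the restricted maps follows from the finiteness of $f$ restricted to the preimage. This bookkeeping, rather than the local computation, is where I expect the main care to be needed.
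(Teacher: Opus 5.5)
Your proposal is correct and follows the same route as the paper. You work over the product neighbourhoods $V_{p,\varepsilon}$, use the sheet decomposition of $W_p$ (where degree-one sheets carry no ramification), and apply the normal forms of Lemmas \ref{lem1} and \ref{lem2}. This shows that $R_{c,1}$, resp.\ $R_{n,1}$ and $R_{n,2}$, are smooth discs mapped one-to-one onto $o\times\mathcal D^{k-2}_{\varepsilon}$. You spell out what the paper dismisses as ``easy to see'': smoothness, globalising via a bijective \'etale map, and finiteness. Note that the quasi-projectivity of $S_{f,c}$ and $S_{f,n}$ is already part of condition (iii) of the definition, so your final piece of bookkeeping is not needed.
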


\begin{lem}\label{lem3} Let $W_{\varepsilon}\subset X$ be a connected neighborhood in a nonsingular variety $X$ and $f:W_{\varepsilon}\to \mathcal D^k_{\varepsilon}=\{ (z_,\dots, z_k)\in \mathbb C^k\, \mid\, |z_i|< \varepsilon\}$ be a finite holomorphic map. 
Assume that $W_{\varepsilon,\Pi_k}=f^{-1}(\Pi_k)\subset W_{\varepsilon}$ is non-singular, where $\Pi_k=\{ z_k=0\}\subset \mathcal D^k_{\varepsilon}$, and assume also that there are local coordinates $w_1,\dots,w_{k-1}$ in $W_{\varepsilon,\Pi_k}$ such that $f_{\mid W_{\varepsilon,\Pi_k}}: W_{\varepsilon,\Pi_k}\to \Pi_k$ is given by functions  $z_1=-\frac{1}{2}(w_1^3-3w_1w_2)$ and $z_i=w_i$ for $i=2,\dots, k-1$. 

Then there are a positive $\varepsilon_1<\varepsilon$ and  a neighborhood $V_{\varepsilon_1}\subset \mathcal D^k_{\varepsilon}$, 
$$V_{\varepsilon_1}\simeq \mathcal D_{\varepsilon_1}^k=\{ (\widetilde z_1,\dots, \widetilde z_k)\in\mathbb C^k\,\mid\, |\widetilde z_i|<\varepsilon_1\},$$  and there are coordinates $\widetilde w_1,\dots, \widetilde w_k$ in $W_{\varepsilon_1}=f^{-1}(V_{\varepsilon_1})$ such that  
$f_{\mid W_{\varepsilon_1}}:W_{\varepsilon_1}\to \mathcal D^k_{\varepsilon_1}$ is given by functions  $\widetilde z_1=-\frac{1}{2}(\widetilde w_1^3-3\widetilde w_1\widetilde w_2)$,  $\widetilde z_i=\widetilde w_i$ for $i=2,\dots,k$.   
\end{lem}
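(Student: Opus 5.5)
We treat $W_{\varepsilon}\to\mathcal D^k_{\varepsilon}$ as a one-parameter deformation, with parameter $z_k$, of the normal-form cusp fold on $\Pi_k$. The plan is to show first that $f$ is a finite map of degree $3$ near the point $o'$ (the origin of the coordinates $w_i$ on $W_{\varepsilon,\Pi_k}$), and then to bring it to normal form using the versal deformation of the germ $w_1\mapsto w_1^3$ together with the unfolding structure.

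\emph{Step 1: coordinates upstairs.} We put $\widetilde w_k:=z_k\circ f$. Since $W_{\varepsilon,\Pi_k}=\{\widetilde w_k=0\}$ is non-singular of dimension $k-1$ and $X$ is non-singular, $d\widetilde w_k\neq 0$ along $W_{\varepsilon,\Pi_k}$; otherwise the zero locus of $\widetilde w_k$ would be singular, or non-reduced as a fibre, and the fibre would not equal the non-singular variety $W_{\varepsilon,\Pi_k}$. We read the hypothesis scheme-theoretically, as $f^*(z_k)$ generating the ideal. We then extend $w_1,\dots,w_{k-1}$ to holomorphic functions on a neighbourhood of $o'$ in $W_{\varepsilon}$. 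Together with $\widetilde w_k$ they form local coordinates $w_1,\dots,w_{k-1},w_k$ at $o'$. In these coordinates
$$f=(F_1,\dots,F_{k-1},w_k),\qquad F_1(w,0)=-\tfrac12(w_1^3-3w_1w_2),\quad F_i(w,0)=w_i\ (2\le i\le k-1).$$

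\emph{Step 2: straighten $z_2,\dots,z_{k-1}$.} The map $(w_1,\dots,w_k)\mapsto(w_1,F_2,\dots,F_{k-1},w_k)$ is a local diffeomorphism, since its differential at $o'$ is the identity modulo terms in $w_k$. We therefore replace $w_2,\dots,w_{k-1}$ by $F_2,\dots,F_{k-1}$ and get $z_i=w_i$ for $2\le i\le k$. Now $f=(F(w),w_2,\dots,w_k)$, where $F$ is an unfolding, with parameters $u=(w_2,\dots,w_k)$, of the one-variable germ $F(w_1,0)=-\frac12 w_1^3$.

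\emph{Step 3: normal form.} This is the main step. We apply the Weierstrass/Malgrange preparation (the versality of $w_1^3$, whose miniversal unfolding is $w_1^3+a w_1+b$). We obtain a change $w_1\mapsto\phi(w_1,u)$ with $\phi(w_1,u_0)=w_1$ on $w_k=0$, where $u_0=(w_2,\dots,w_{k-1},0)$, such that
$$F=-\tfrac12\bigl(\phi^3-3a(u)\phi\bigr)+b(u),\qquad a(u_0)=w_2,\ b(u_0)=0.$$
Concretely: $F-F(0,u)$ has a zero of order $3$ in $w_1$ at $u=0$; a Tschirnhaus shift removes the quadratic term, and extracting cube roots of units gives $\phi$. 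The conditions $a(u_0)=w_2$, $b(u_0)=0$ hold because $a$ and $b$ are uniquely determined by $F$ once $\phi$ is normalized. Now define new target coordinates $\widetilde z_1=z_1-b(z_2,\dots,z_k)$, $\widetilde z_2=a(z_2,\dots,z_k)$, and $\widetilde z_i=z_i$ for $i\ge3$. This is a coordinate change on $\mathcal D^k$ because $\partial a/\partial z_2=1$ at $0$. Upstairs we set $\widetilde w_1=\phi$, $\widetilde w_2=a(u)$, and $\widetilde w_i=w_i$ for $i\ge 3$. Then $f$ becomes $\widetilde z_1=-\frac12(\widetilde w_1^3-3\widetilde w_1\widetilde w_2)$ and $\widetilde z_i=\widetilde w_i$.

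\emph{Step 4: globalize over a small polydisc.} We let $V_{\varepsilon_1}$ be the polydisc $|\widetilde z_i|<\varepsilon_1$. Because the new map is finite of degree $3$ onto $V_{\varepsilon_1}$ and already accounts for the full degree of $f$ over $V_{\varepsilon_1}$, the preimage $W_{\varepsilon_1}=f^{-1}(V_{\varepsilon_1})$ is exactly the domain of the coordinates $\widetilde w$. This requires the fibre over $0$ to be the single point $o'$, which follows from finiteness and connectedness, since $f^{-1}(0)\cap W_{\varepsilon,\Pi_k}=\{o'\}$.

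The main obstacle is Step 3. The preparation argument has to be carried out with the correct normalization, so that the parameter $a$ gives an honest coordinate. This is the point where the genericity of the $w_1w_2$ term in the hypothesis is used.
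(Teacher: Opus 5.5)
Your proof is correct, but it reaches the normal form by a different route from the paper.

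\textbf{The paper's route.} The paper works on the ring side. It notes that $\mathcal O_{W_\varepsilon,p}$ is finite of degree $3$ over $\mathcal O_{\mathcal D^k_\varepsilon,o}$. It lifts $w_1$ and writes the monic cubic $w_1^3+6g_2w_1^2-3g_1w_1+2g_0=0$ that the lift satisfies. The coefficients $g_i$ are holomorphic in \emph{all} of $z_1,\dots,z_k$ and reduce to $0$, $z_2$, $z_1$ modulo $z_k$. One explicit Tschirnhausen shift $\widetilde w_1=w_1+2z_k\widetilde g_2$ turns this into $\widetilde w_1^3-3\widetilde z_2\widetilde w_1+2\widetilde z_1=0$. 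The new base coordinates are read off from the coefficients, so they depend on $z_1$ as well.

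\textbf{Your route.} You work on the source side. After straightening $z_2,\dots,z_k$, you treat $f$ as an unfolding $F(w_1,u)$ of $-\frac12w_1^3$ and invoke $\mathcal R^+$-versality of the $A_2$ germ. This gives $F=-\frac12(\phi^3-3a(u)\phi)+b(u)$, so your base change is the triangular shear $\widetilde z_1=z_1-b$, $\widetilde z_2=a$, independent of $z_1$. The gain is a cleaner normal form and a transparent reason why $\partial a/\partial z_2(0)=1$. The cost is reliance on the Mather--Malgrange versality theorem, whereas the paper needs only finiteness of the local algebra and a hand computation.

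\textbf{A caveat on Step 3.} Your parenthetical ``concretely'' sketch does not itself prove the versality claim. Weierstrass preparation gives $F-F(0,u)=U(w_1,u)\,(w_1^3+c_2w_1^2+c_1w_1)$, where the unit $U$ depends on $w_1$. Once the linear term is present, a cube root of $U$ cannot be absorbed into $\phi$ while keeping the coefficient of $\phi$ independent of $w_1$. So the step rests on the cited theorem. Alternatively, apply preparation to $F(w_1,u)-z_1$ with $z_1$ as a variable, which is essentially the paper's argument.

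\textbf{A point in your favour.} You made explicit the scheme-theoretic reading $d(z_k\circ f)\neq0$ along $W_{\varepsilon,\Pi_k}$. The paper uses this tacitly when it asserts $\deg_pf=\deg_pf_{\mid W_{\varepsilon,\Pi_k}}=3$ and that $w_1,z_2,\dots,z_k$ are local parameters. Without it the lemma fails: take $z_k\circ f=w_k^2$, with the other components in normal form, and $f$ has local degree $6$.
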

\proof The inverse image $p=f^{-1}(o)$ of the origin $o=(0,\dots, 0)\in\mathcal D^k_{\varepsilon}$ of the $k$-disc $\mathcal D^k_{\varepsilon}$ is a single point, since $f_{\mid W_{\varepsilon,\Pi_k}}^{-1}(o)$ is a single point. Consequently, degree $\deg_p f$ of the holomorphic map $f$ at the point $p$ is equal to $\deg_p f=\deg_p f_{\mid W_{\varepsilon,\Pi_k}}=3$ and therefore the local ring $\mathcal O^{\it an}_{W_{\varepsilon},p}$  of holomorphic functions  at the point $p\in W_{\varepsilon}$ is a module of finite type over the local  ring  $\mathcal O^{\it an}_{\mathcal D_{\varepsilon}^k,o}$. Let us identify the ring $\mathcal O^{\it an}_{\mathcal D_{\varepsilon}^k,o}$ with its image $f^*(\mathcal O^{\it an}_{\mathcal D_{\varepsilon}^k,o})\subset 
\mathcal O^{\it an}_{W_{\varepsilon},p}$ and identify $\mathcal O^{\it an}_{\Pi_k,o}$ with its image 
$f_{W_{\varepsilon,\Pi_k}}^*(\mathcal O_{\Pi_k,o})\subset \mathcal O^{\it an}_{W_{\varepsilon,\Pi_k},p}$. Then $\mathcal O^{\it an}_{W_{\varepsilon,\Pi_k},p}=\mathcal O^{\it an}_{\Pi_k,o}w_1^2\oplus \mathcal O^{\it an}_{\Pi_k,o}w_1\oplus\mathcal O^{\it an}_{\Pi_k,o}$ and 
\begin{equation} \label{w1} w_1^3=3z_2w_1-2z_1.\end{equation}

The imbedding $\iota:W_{\varepsilon,\Pi_k} \hookrightarrow W_{\varepsilon}$ defines a surjective homomorphism 
$$\iota^*:\mathcal O^{\it an}_{W_{\varepsilon},p}\to \mathcal O^{\it an}_{W_{\varepsilon,\Pi_k},p}, \qquad \iota^*(g)= g_{\mid W_{\varepsilon,\Pi_k}}\,\, \text{for}\,\, g\in \mathcal O^{\it an}_{W_{\varepsilon},p}.$$
Let us choose a function in $\iota^{-1}(w_1)\subset\mathcal O^{\it an}_{W_{\varepsilon},p}$ and denote it by the same letter $w_1$. Then the functions $w_1$ and $z_2, \dots,z_k$ are local parameters in $W_{\varepsilon}$ at $p$ and 
\begin{equation}\label{w2} w_1^3+6g_2(z_1,\dots z_k)w_1^2-3g_1(z_1,\dots,z_k)w_1+2g_0(z_1,\dots,z_k)=0,\end{equation}
where $g_i$ are functions in $\mathcal O^{\it an}_{\mathcal D^k_{\varepsilon},o}$ such that
$$\begin{array}{lcl} g_2(z_1,\dots,z_k) & = & z_k\widetilde g_2(z_1,\dots,z_k), \\ 
g_1(z_1,\dots,z_k) & = & z_2+z_k\widetilde g_1(z_1,\dots,z_k), \\ g_0(z_1,\dots,z_k) & = & z_1+z_k\widetilde g_0(z_1,\dots,z_k), \end{array}$$
since the restriction of equality (\ref{w2})  to $W_{\varepsilon,\Pi_k}$ is equality (\ref{w1}). 
It is easy to see that the functions 
$$\begin{array}{lcll}\widetilde z_1  & = & z_1+z_k(\widetilde g_0+3z_2\widetilde g_2+3z_k\widetilde g_1\widetilde g_2+8z_k^2\widetilde g_2^3), & \\ 
\widetilde z_2 & = & z_2+z_k(\widetilde g_1+4z_k\widetilde g_2^2),  & \\ \widetilde z_i & = & z_i &  \text{for}\,\, i=3,\dots, k \end{array}$$ 
are coordinates in $V_{\varepsilon_1}\simeq \mathcal D^k_{\varepsilon_1}=\{ (\widetilde z_1,\dots, \widetilde z_k)\in\mathbb C^k\,\mid\, |\widetilde z_i|<\varepsilon_1\}$ at the point $o$ if $\varepsilon_1$ is a sufficiently small positive number and  
the functions 
$$\begin{array}{lcll}\widetilde w_1  & = & w_1+2z_k\widetilde g_2, & \\ 
\widetilde w_2 & = & w_2+z_k(\widetilde g_1+4z_k\widetilde g_2^2),  & \\ \widetilde w_i & = & w_i &  \text{for}\,\, i=3,\dots, k \end{array}$$ 
are coordinates in $W_{\varepsilon_1}=f^{-1}(V_{\varepsilon_1})$. 

By doing the Tschirnhausen transformation in equality (\ref{w2}), it is easy to check that in coordinates  $\widetilde z_1,\dots, \widetilde z_k$ in $V_{\varepsilon_1}\subset\mathcal D_{\varepsilon}^{k}$ and in coordinates $\widetilde w_1,\dots, \widetilde w_k$ in $W_{\varepsilon_1}$, the map $f_{\mid W_{\varepsilon_1}}:W_{\varepsilon_1}\to V_{\varepsilon_1}$ is given by functions  
$$\widetilde z_1=-\frac{1}{2}(\widetilde w_1^3-3\widetilde w_1\widetilde w_2)\,\, \text{and}\,\, \widetilde z_i=\widetilde w_i\,\, \text{for}\,\, i=2,\dots,k.$$   

\begin{df} Let $D\subset X$ be a divisor in a non-singular variety $X$, $\dim X=k>1$. A  point $p\in D$ is said to be a singular $(k-2)$-dimensional point of $D$ of singularity type $A_{m-1}$ if there are a neighborhood (in complex topology) $U_p\subset X$ of the point $p$ and local coordinates $w_q,\dots, w_k$ in $U_p$ such that $D\cap U_p$ is given by equation $w_1^2-w_2^m=0$. 
\end{df} 

\begin{lem}\label{lem3'} Let $W_{\varepsilon}\subset X$ be a connected neighborhood in a nonsingular variety $X$ and $f:W_{\varepsilon}\to \mathcal D^k_{\varepsilon}=\{ (z_,\dots, z_k)\in \mathbb C^k\, \mid\, |z_i|< \varepsilon\}$, $k\geq 3$, be a finite holomorphic map such that $\deg _{o'} f=3$ and $o'=f^{-1}(o)$ is a $(k-2)$-dimensional singular point of the divisor $R_f\cup C_f$ of singularity type $A_3$, where $o=(0,\dots,0)$. 
Assume that $W_{\varepsilon,\Pi_k}=f^{-1}(\Pi_k)\subset W_{\varepsilon}$ is non-singular and meets transversally $R_f\cup C_f$ at the point $o'$, where  
$\Pi_k=\{ z_k=0\}\subset \mathcal D^k_{\varepsilon}$.

Then there are positive $\varepsilon_1<\varepsilon$ and  a neighborhood $V_{\varepsilon_1}\subset \mathcal D^k_{\varepsilon}$, 
$$V_{\varepsilon_1}\simeq \mathcal D^k_{\varepsilon_1}=\{ (\widetilde z_1,\dots, \widetilde z_k)\in\mathbb C^k\,\mid\, |\widetilde z_i|<\varepsilon_1\},$$  and there are coordinates $\widetilde w_1,\dots, \widetilde w_k$ in $W_{\varepsilon_1}=f^{-1}(V_{\varepsilon_1})$ such that  
$f_{\mid W_{\varepsilon_1}}:W_{\varepsilon_1}\to \mathcal D^k_{\varepsilon_1}$ is given by functions  $\widetilde z_1=-\frac{1}{2}(\widetilde w_1^3-3\widetilde w_1\widetilde w_2)$, $\widetilde z_i=\widetilde w_i$ for $i=2,\dots,k$, and $B_{f_{\mid\, W_{\varepsilon_1}}}$ is given by equation 
$\widetilde z_1^2-\widetilde z_2^3=0$. 
\end{lem}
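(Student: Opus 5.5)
The plan is to reduce Lemma \ref{lem3'} to Lemma \ref{lem3} by showing that the restriction $f_{\mid W_{\varepsilon,\Pi_k}}:W_{\varepsilon,\Pi_k}\to\Pi_k$ is, in suitable coordinates, exactly the map required in the hypothesis of Lemma \ref{lem3}. I expect this to go by induction on $k$, using Theorem \ref{T} as the base case.

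First I control the restricted divisor. Since $W_{\varepsilon,\Pi_k}$ is non-singular and meets $R_f\cup C_f$ transversally at $o'$, the divisor $(R_f\cup C_f)\cap W_{\varepsilon,\Pi_k}$ is again a $(k-3)$-dimensional singularity of type $A_3$ in $W_{\varepsilon,\Pi_k}$. Concretely, take coordinates with $R_f\cup C_f=\{w_1^2-w_2^4=0\}$ and $W_{\varepsilon,\Pi_k}$ transversal to the singular stratum $\{w_1=w_2=0\}$. Then $w_1,w_2$ restrict to part of a coordinate system on $W_{\varepsilon,\Pi_k}$. Moreover, $\deg_{o'}f_{\mid W_{\varepsilon,\Pi_k}}=3$, since the fibre over $o$ is still the single point $o'$ and the local degree is preserved by the hyperplane section. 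Also, the ramification and branch data of the restriction are the restrictions of $R_f$, $C_f$ and $B_f$, by transversality.

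Next I iterate this step. I cut down by generic coordinate hyperplanes $z_k=0,\dots,z_3=0$, chosen generically so that transversality to the $A_3$ stratum persists at each stage. This gives a finite degree-$3$ map of smooth surface germs $(W',o')\to(V,o)$ whose divisor $R\cup C$ has an $A_3$ singularity, i.e.\ two smooth branches with intersection number $2$. By Theorem \ref{T}, $f$ has the form $f_n$, and part (3) forces $n=1$: indeed $n=0$ gives $R=\{w_1=0\}$ with empty $C$, which is not $A_3$, and $n\geq2$ gives singular $R$ and $C$. After rescaling, $f_1$ is $z_1=-\frac12(w_1^3-3w_1w_2)$, $z_2=w_2$, which is the normal form of Lemma \ref{lem1}. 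Applying Lemma \ref{lem3} successively in dimensions $3,\dots,k$ then lifts this normal form to all of $W_{\varepsilon_1}$. At each step the hypothesis of Lemma \ref{lem3} is exactly the conclusion of the previous step on the slice.

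Finally, the branch locus follows by direct computation. The critical locus of $w_1\mapsto -\frac12(w_1^3-3w_1\widetilde w_2)$ is $w_1^2=\widetilde w_2$. Substituting gives $\widetilde z_1=\pm \widetilde w_2^{3/2}$, so $B=\{\widetilde z_1^2-\widetilde z_2^3=0\}$.

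The main obstacle is the hyperplane cutting step. Lemma \ref{lem3} needs the slices to be the coordinate hyperplanes $z_i=0$ of the target, but genericity is only available after a linear change of the $z$-coordinates. I plan to first make that linear change so that each $\{z_i=0\}$, $3\leq i\leq k$, pulls back to a smooth hypersurface transversal to the $A_3$ stratum, and then check that transversality of the preimages is inherited at each inductive stage. The only hypothesis available for this is smoothness of $f^{-1}(\Pi_k)$, which is why this step needs care.
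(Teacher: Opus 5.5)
You follow the route the paper indicates: its entire proof is ``induction on $k$ from Theorem~\ref{T} and Lemma~\ref{lem3}''. The parts you spell out are sound. A slice transversal to the stratum $S=\mathrm{Sing}(R_f\cup C_f)$ inherits an $A_3$ configuration and local degree $3$, the $A_3$ condition forces $n=1$ in Theorem~\ref{T}, and Lemma~\ref{lem3} lifts the normal form along a nested chain of slices.

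But the step you defer is a genuine gap, and no generic choice of slices closes it. Because $\deg_{o'}f=3$, $\ker df_{o'}$ is a line: corank $c\ge 2$ would give a local algebra of length $\ge 2^c>3$. For every hyperplane $H\subset\Pi_k$ through $o$ with smooth preimage, $T_{o'}f^{-1}(H)=(df_{o'})^{-1}(T_oH)\supseteq\ker df_{o'}$. Now suppose $\ker df_{o'}\subset T_{o'}S$. At the threefold stage the stratum is a curve whose tangent line \emph{is} $\ker df_{o'}$, so no slice of that threefold is transversal to it. You therefore never reach a surface carrying an $A_3$ configuration. The hypothesis only gives $T_{o'}S\not\subset T_{o'}W_{\varepsilon,\Pi_k}$. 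For $k=3$ this excludes $\ker df_{o'}\subset T_{o'}S$; for $k\ge4$ it does not. So you must prove $\ker df_{o'}\cap T_{o'}S=0$, i.e.\ that $f_{\mid S}$ is an immersion at $o'$. The paper's one-line argument is silent on this too.

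Within your framework you can get this from Theorem~\ref{T} by contradiction. If $\ker df_{o'}\subset T_{o'}S$, a generic smooth surface slice $\Sigma$ of the threefold stage is tangent to the stratum curve. Then $R_f\cap\Sigma$ and $C_f\cap\Sigma$ are distinct smooth branches with intersection number $\ge4$, contrary to Theorem~\ref{T}(3).

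A cleaner route makes the induction unnecessary. Corank one gives $\mathcal O_{W,o'}=\mathcal O_{\mathcal D^k,o}[w]/(w^3+pw+q)$ after a Tschirnhausen shift, with $dq(o)\ne0$ since $W$ is smooth. Moreover $f^*(4p^3+27q^2)=(3w^2+P)^2(3w^2+4P)$ with $P=p\circ f$, so $R_f=\{3w^2+P=0\}$ and $C_f=\{3w^2+4P=0\}$. These being two distinct smooth branches forces $dP(o')\ne0$. Indeed, if $dP(o')=0$, both $3w^2+P$ and $3w^2+4P$ lie in $\mathfrak m_{o'}^2$, and factoring $9w^2=4(3w^2+P)-(3w^2+4P)$ shows their zero sets cannot then be two distinct smooth hypersurfaces. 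Equivalently, $dp(o)$ and $dq(o)$ are independent. Then $S=\{w=P=0\}$ is transversal to $\ker df_{o'}$, the $w$-direction. Also $\widetilde z_1=q/2$ and $\widetilde z_2=-p/3$ complete to coordinates in which $f$ is already $\widetilde z_1=-\frac12(\widetilde w_1^3-3\widetilde w_1\widetilde w_2)$, $\widetilde z_i=\widetilde w_i$ for $i\ge2$, where $\widetilde w_1=w$, and $B_f=\{\widetilde z_1^2=\widetilde z_2^3\}$. This proves the lemma outright, without Theorem~\ref{T}, Lemma~\ref{lem3}, or even the slice hypothesis.
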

\proof Applying induction on $k$, Lemma \ref{lem3'} directly follows from Theorem \ref{T} and Lemma \ref{lem3}. \qed \\

\begin{lem} \label{A_3} Let $X\subset \mathbb P^N$ be a non-singular irreducible $k$-dimensional projective variety, $R$ and $C$ be reduced divisors in $X$, and $S$ an irreducible component of $R\cap C$. Assume that $p\in S$ is a $(k-2)$-dimensional singular point of the divisor $R\cup C$ of singularity type $A_3$. Then there is an everywhere dense Zariski open subset $S_1$ of $S$ the points of which are  $(k-2)$-dimensional singular points of the divisor $R\cup C$ of singularity type $A_3$.
\end{lem}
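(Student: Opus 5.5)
The plan is to separate a local analytic normal form from an algebraic, Zariski-open condition. I will show that a point $q\in S$ is a $(k-2)$-dimensional singular point of $R\cup C$ of type $A_3$ as soon as the following hold:
\begin{itemize}
\item[(a)] $R$, $C$ and $S$ are non-singular at $q$;
\item[(b)] near $q$ the restricted divisor $C_{\mid R}\in\text{Div}\,R$ equals $2S$, i.e. $S$ occurs in $C_{\mid R}$ with multiplicity exactly $2$ and no other component of $C_{\mid R}$ passes through $q$.
\end{itemize}
I will then show that these conditions define a non-empty Zariski open subset $S_1\subset S$, which is dense because $S$ is irreducible.

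\emph{Step 1: (a) and (b) imply type $A_3$.} Choose local coordinates $w_1,\dots,w_k$ at $q$ with $R=\{w_1=0\}$ and $S=\{w_1=w_2=0\}$. Let $h$ be a local equation of $C$. Condition (b) says $h(0,w_2,\dots,w_k)=w_2^2u$ with $u$ a unit.
\begin{itemize}
\item All partial derivatives $\partial h/\partial w_j$, $j\geq 2$, vanish at $q$. Since $C$ is non-singular at $q$, this forces $\partial h/\partial w_1(q)\neq 0$.
\item By the implicit function theorem, $C=\{w_1=w_2^2v\}$ with $v$ a unit.
\item Replace $w_2$ by $w_2\sqrt{v}$, which is again a coordinate. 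Then $R\cup C=\{w_1(w_1-w_2^2)=0\}$.
\item The substitution $w_1=w_1'+\tfrac12 w_2^2$ turns this into $w_1'^2-\tfrac14 w_2^4=0$. Rescaling gives $w_1'^2-w_2^4=0$, which is the $A_3$ form.
\end{itemize}

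\emph{Step 2: the conditions are Zariski open on $S$.} Note first that $R\neq C$ near $S$ is needed, and I assume it. Then:
\begin{itemize}
\item The non-singular loci of $R$, $C$ and $S$ are Zariski open.
\item The condition that no component of $C_{\mid R}$ other than $S$ passes through $q$ is open: one removes the closed set $S\cap\bigcup(\text{other components})$. This is a proper closed subset of $S$, since $S$ is itself a component of $R\cap C$ and the other components are distinct from it.
\item Wherever $R$, $S$ are non-singular and no other component meets $S$, the divisor $C_{\mid R}$ is locally $m\cdot S$. Here $m$ is the multiplicity of $S$ in the divisor $C_{\mid R}$ on the (locally irreducible) variety $R$, and $m$ is constant along $S$.
\end{itemize}
So there is a Zariski open $S_1$ on which (a) holds and $C_{\mid R}=mS$ locally. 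It remains to check that $S_1\neq\emptyset$ and $m=2$.

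\emph{Step 3: use the given point $p$ to fix $m=2$.} At $p$ we have the normal form $w_1^2-w_2^4=0$, i.e. the two smooth branches $w_1=\pm w_2^2$.
\begin{itemize}
\item Since $R$ and $C$ are reduced divisors with $R\cup C$ locally equal to this union, and $S\subset R\cap C$, one branch is $R$ and the other is $C$ near $p$.
\item So near $p$ both $R$ and $C$ are smooth, $R\cap C=\{w_1=w_2=0\}$ set-theoretically, and this is $S$ near $p$. It is also the only component of $R\cap C$ through $p$.
\item Restricting $w_1-w_2^2$ to $R=\{w_1+w_2^2=0\}$ gives $-2w_2^2$. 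Hence $C_{\mid R}=2S$ near $p$.
\end{itemize}
Therefore every point of $S$ close to $p$ satisfies (a) and (b). This shows that $S_1$ is non-empty and that $m=2$, so $S_1$ is everywhere dense in $S$, and by Step 1 all its points are of type $A_3$.

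The main obstacle is Step 2: one must make sure the multiplicity of $S$ in $C_{\mid R}$ is a global invariant of the component $S$, rather than something that could jump on an open set. I would handle this by working on $R^{\rm reg}$, where $C_{\mid R}$ is a Cartier divisor with a well-defined multiplicity along the prime divisor $S\cap R^{\rm reg}$.
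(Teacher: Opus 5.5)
Your proof is correct and follows essentially the same route as the paper. Both arguments reduce the $A_3$ condition at $q\in S$ to smoothness of $R$, $C$, $S$ together with $C_{\mid R}=2S$ near $q$, read off the multiplicity $2$ at the given point $p$, propagate it along the irreducible $S$, and then straighten $C$ to $w_1\pm w_2^2=0$: the paper does this with global regular functions $h_1,h_2,g_1,g_2$ on an affine open $U$, while you do it pointwise via the implicit function theorem. Your version is more explicit than the paper's about discarding the other components of $R\cap C$ and about why the multiplicity cannot jump; the hypothesis you flag is also used tacitly by the paper, and is best stated as ``$R$ and $C$ have no common component through $p$'' (equivalently $\dim S=k-2$), since ``$R\neq C$ near $S$'' alone does not force them to be the two separate branches at $p$.
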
   
\proof Since $p$ is a $(k-2)$-dimensional singular point of the divisor $R\cup C$ of singularity type $A_3$, then there is a neighborhood 
$W_{\varepsilon}\subset X$ of the point $p$, 
$$W_{\varepsilon}\simeq \mathcal D_{\varepsilon}^k=\{ (w_1,\dots, w_k)\in\mathbb C^k\,\mid\, |w_i|<\varepsilon\},$$ 
such that $p=(0,\dots,0)$, $R\cap W_{\varepsilon}$ is given by equation $ w_1-w_2^2=0$, and $C\cap W_{\varepsilon}$ is given by equation $w_1+w_2^2=0$. Note that $R$, $C$ and $S$ are nonsingular at $p$. In a new coordinate system $\widetilde z_1=w_1-w_2^2, \widetilde z_i=w_i$, $i=2,\dots, k$, $R\cap W_{\varepsilon}$ is given by equation $\widetilde z_1=0$ and $C\cap W_{\varepsilon}$ is given by equation $\widetilde z_1+2\widetilde z_2^2=0$.

Using standard statements in algebraic geometry contained, for example, in \cite{Sh}, it is easy to show u that there is a linear projection $\text{pr} :\mathbb P^N\to \mathbb P^k$ such that $\text{pr}_{\mid\, X}$ is a finite morphism and there are a Zariski open subset $U$ of $X$ and affine space $\mathbb C^N=\text{Spec}\, \mathbb C[z_1,\dots, z_N]\subset \mathbb P^N$ such that 
\begin{itemize} \item[$(1)$] $p\in W_{\varepsilon}\subset U\subset \mathbb C^N$,
\item[$(2)$] $\text{pr}_{\mid\, \mathbb C^N}: (z_1,\dots, z_N)\mapsto (z_1,\dots, z_k)$,
\item[$(3)$] $R\cap U$, $C\cap U$, $S\cap U$ are non-singular and and irreducible, 
 \item[$(4)$] $R\cap U=\{h_1=0\}$, $S\cap U=\{ h_1=h_2=0\}$, where  $h_1,\, h_2\in\mathbb C[U]$ are regular functions in $U$, 
\item[$(5)$] $h_1,h_2, z_3-z_3(q),\dots,z_k-z_k(q)$ are local parameters in $U$ at each point  $q\in S\cap U$. 
\end{itemize}

The restriction $C_{\mid\, R\cap W_{\varepsilon}}$ of the divisor $C$ to $R\cap W_{\varepsilon}$ is $2(S\cap W_{\varepsilon})\in \text{Div}(R\cap W_{\varepsilon})$. Therefore the restriction $C\cap (R\cap U)$ of $C$  to $R\cap U$ is $2S\cap (R\cap U)\in\text{Div}(R\cap U)$ and hence, $C$ is given in $U$ by equation $g_2h_2^2+h_1 g_1=0$, where, without loss of generality. we can assume that $g_1,g_2\in \mathbb C[U]$. The function $g_1$ does not vanish at any point $q\in S\cap U$, since $C$ is non-singular at each $q\in S\cap U$ and $g_2$ does not vanish identically in $U$. In addition, we can assume that the function $g_2$ does not vanish at any point $q\in S\cap U$, since we can replace $U$ with $U'=U\setminus \{ g_2=0\})$. 
Therefore for each point $q\in S\cap U'$ the holomorphic functions $w_1=g_1h_1$, $w_2=\sqrt{g_2}h_2$ and $w_i=z_i-z_i(q)$, $i=3,\dots, k$, are local coordinates in some open (in complex topology) neighborhood $W_q$ of $q$ and in this neighborhood the divisor $(R\cup C)\cap W_q$ is given by equation $w_1(w_1+w_2^2)=0$, i.e.,  each point $q\in S\cap U'$ is a $(k-2)$-dimensional singular point of the divisor $R\cup C$ of singularity type $A_3$. \qed \\

\section{Inductive definition of covers, generic outside of codimension three} \label{ind-def}
\subsection{Auxiliary statements.} Before to give another definition of covers, generic outside of codimension three, we prove several lemmas, that are needed in order to prove the equivalence of definitions.  

\begin{lem}\label{dim2} Let $S\subset \mathbb P^k$ be a non-singular quasi-projective variety. $\dim S=k-2\geq 1$. Then  the set $\mathcal G_{S}$  that consists of the hyperplanes $\Pi$ meeting $S$ transversally at each point of $\Pi \cap S$, is an everywhere dense Zariski open subset  of $\hat{\mathbb P}^k$.   
\end{lem}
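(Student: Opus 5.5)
The plan is to describe the complement $\hat{\mathbb P}^k\setminus\mathcal G_S$ as the image of an incidence variety and count dimensions. Put
$$I=\{(s,\Pi)\in S\times\hat{\mathbb P}^k\ \mid\ s\in\Pi,\ T_sS\subset T_s\Pi\},$$
where $T_sS$ is the embedded projective tangent space to $S$ at $s$. Since $S$ is non-singular with $\dim S=k-2$, the space $T_sS$ is a $(k-2)$-dimensional linear subspace of $\mathbb P^k$. A hyperplane $\Pi\ni s$ meets $S$ non-transversally at $s$ exactly when $T_sS\subset\Pi$, because $\Pi$ is non-singular and $\dim S+\dim \Pi-k=k-3$. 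Hence
$$\hat{\mathbb P}^k\setminus\mathcal G_S=\text{pr}_2(I),$$
where $\text{pr}_2$ is the projection of $I$ to $\hat{\mathbb P}^k$.

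The first step is to show that $I$ is a closed subvariety of $S\times\hat{\mathbb P}^k$ of dimension $k-1$. Closedness holds because the Gauss map $s\mapsto T_sS$ is a morphism from the non-singular variety $S$ to a Grassmannian, and the condition $T_sS\subset\Pi$ is closed. For the dimension, note that the fibre of $\text{pr}_1:I\to S$ over $s$ is the set of hyperplanes containing the $(k-2)$-plane $T_sS$, which is a pencil $\mathbb P^1$. So $I$ is a $\mathbb P^1$-bundle over $S$ and $\dim I=(k-2)+1=k-1<k$. It follows that $\text{pr}_2(I)$ is a constructible subset of $\hat{\mathbb P}^k$ of dimension at most $k-1$. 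Consequently $\mathcal G_S$ contains the complement of the proper closed set $\overline{\text{pr}_2(I)}$, and in particular $\mathcal G_S$ is everywhere dense.

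The second step is openness, that is, showing that $\text{pr}_2(I)$ is itself Zariski closed. This is the main obstacle, because $S$ is only quasi-projective and $\text{pr}_2$ need not be proper. First I will pass to the closure $\overline S\subset\mathbb P^k$ and to the closure $\overline I$ of $I$ in $\overline S\times\hat{\mathbb P}^k$. Then $\text{pr}_2(\overline I)$ is closed, since $\hat{\mathbb P}^k$ is projective and $\overline S$ is complete. It has dimension at most $k-1$. The remaining task is to compare $\text{pr}_2(\overline I)$ with $\text{pr}_2(I)$. The extra points come from pairs $(s,\Pi)\in\overline I$ with $s\in\overline S\setminus S$. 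I would try to show, using the non-singularity of $S$ and the local description of $\text{Sing}\,B_f$ to which the lemma is applied, that every such $\Pi$ is also tangent to $S$ at some point of $S$.

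I must flag a caveat about this step. If $S$ is a non-singular closed curve in $\mathbb P^3$ with one point $q$ removed, then a general hyperplane of the pencil through the tangent line at $q$ lies in $\mathcal G_S$, yet it is a limit of tangent hyperplanes at points of $S$. In that case $\text{pr}_2(I)$ is not closed, so the second step cannot succeed for an arbitrary non-singular quasi-projective $S$. What the dimension count unconditionally gives is the density of $\mathcal G_S$ and the existence of the dense Zariski open subset $\hat{\mathbb P}^k\setminus\overline{\text{pr}_2(I)}\subset\mathcal G_S$. This seems to be all that the later inductive arguments use, since there one only needs a general hyperplane section. Openness of $\mathcal G_S$ itself holds whenever $S$ is closed in $\mathbb P^k$, because then $\text{pr}_2$ is proper.
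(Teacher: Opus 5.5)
Your argument is the paper's: the same incidence variety $\mathcal T=\{(p,\Pi)\mid \overline T_{S,p}\subset\Pi\}$, fibres over $S$ that are pencils, $\dim\mathcal T=k-1$, and passage to the complement of $\overline{\psi_2(\mathcal T)}$. The paper's proof ends by asserting $\mathcal G_S=\hat{\mathbb P}^k\setminus\overline{\psi_2(\mathcal T)}$, but only the inclusion $\supseteq$ holds, so your caveat is correct and identifies a genuine flaw in the lemma as stated. Your example becomes airtight if $C$ is, say, a twisted cubic: its tangent lines are pairwise skew, so the whole pencil through $T_qC$ lies both in $\mathcal G_S$ and in the irreducible dual surface $\overline{\psi_2(\mathcal T)}=C^\vee$. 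By contrast, the example fails if $C$ is a line. What you do establish is that $\mathcal G_S$ is dense and contains a dense Zariski open subset, and that $\mathcal G_S$ is open when $S$ is closed. That is exactly what the proof of Theorem~\ref{thm2} needs, since there $\mathcal G_{f,2}$ can simply be replaced by this smaller open set.
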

\proof Denote $\mathcal T =\{ (p,\Pi)\in S\times \hat{\mathbb P}^k\, \mid\, \overline T_{S,p}\subset \Pi\},$  where $\overline T_{S,p}\subset \mathbb P^k$ is the closure in $\mathbb P^k$ of the tangent space  $T_{S,p}$ of $S$ at $p\in S$, and let $\psi_1:\mathcal T\to S$ and $\psi_2:\mathcal T\to\hat{\mathbb P} ^k$ be the restrictions to $\mathcal T$ of the projections $\text{pr}_1:S\times \hat{\mathbb P}^k\to S$ and $\text{pr}_2:S\times \hat{\mathbb P}^k\to\hat{\mathbb P} ^k$. 

We have $\dim \psi^{-1}_1(p)=1$, since $\dim T_{S,p} =k-2$. Therefore $\dim \mathcal T=k-1$ and hence, $\dim \overline{\psi_2(\mathcal T)}\leq k-1$. Therefore $\mathcal G_{S}=\hat{\mathbb P}^k\setminus \overline{\psi_2(\mathcal T)}$ is an everywhere dense Zariski open subset  of $\hat{\mathbb P}^k$.    \qed 
 
\begin{lem}\label{dim1} A set of hyperplanes in $\mathbb P^k$  that does not contain a subvariety $S$ of $\mathbb P^k$ is an everywhere dense Zariski open subset of $\hat{\mathbb P}^k$.
\end{lem}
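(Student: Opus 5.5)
The statement is a dimension count in the dual projective space $\hat{\mathbb P}^k$. For a fixed subvariety $S\subset\mathbb P^k$, the set $\mathcal H_S=\{\Pi\in\hat{\mathbb P}^k\mid S\subset\Pi\}$ is Zariski closed and proper, so its complement is open and dense. When $S$ has several irreducible components we apply the argument to each one, or simply to $S$ itself. The statement only makes sense if $S\neq\emptyset$; if $S$ is nonempty, the conclusion holds.

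\emph{Step 1: closedness.} Pass to the closure $\overline S$ in $\mathbb P^k$. A hyperplane is closed, so $S\subset\Pi$ holds if and only if $\overline S\subset\Pi$, and we may assume $S$ is closed. Let $\Pi$ be given by $\sum_{j=0}^k a_jx_j=0$, with $(a_0:\dots:a_k)$ its point of $\hat{\mathbb P}^k$. Then $S\subset\Pi$ means $\sum a_jx_j(q)=0$ for every point $q\in S$. For each fixed $q$ this is a single linear condition on $(a_0:\dots:a_k)$, that is, a hyperplane $H_q\subset\hat{\mathbb P}^k$. Hence $\mathcal H_S=\bigcap_{q\in S}H_q$, an intersection of hyperplanes, which is a linear subspace and in particular Zariski closed. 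Equivalently, $\mathcal H_S$ is the linear subspace dual to the linear span $\langle S\rangle$.

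\emph{Step 2: properness.} Choose a point $q\in S$. Then $\mathcal H_S\subset H_q\subsetneq\hat{\mathbb P}^k$, so $\mathcal H_S$ is a proper closed subset. If $S$ spans $\mathbb P^k$, then $\mathcal H_S$ is empty.

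\emph{Step 3: density.} $\hat{\mathbb P}^k$ is irreducible, so the complement of a proper closed subset is a nonempty, and hence everywhere dense, Zariski open subset. This is the required set of hyperplanes not containing $S$.

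None of these steps is a real obstacle. The only point needing care is the reduction to the closure $\overline S$ in Step 1, which is needed when $S$ is merely quasi-projective, as in the applications of this lemma. That reduction uses only that hyperplanes are closed.
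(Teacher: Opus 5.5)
Your proposal is correct and follows the paper's proof: the paper also observes that the hyperplanes containing $S$ form a linear subspace of $\hat{\mathbb P}^k$, namely the one dual to the linear span of $S$, of dimension $k-m-1$ with $m=\dim\langle S\rangle$. Your properness check $\mathcal H_S\subset H_q$ and your remark that $S$ must be nonempty just make explicit what the paper's dimension count leaves implicit.
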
 
\proof   The set  of hyperplanes in $\mathbb P^k$ which contain $S$ is a linear subspace of $\hat{\mathbb P}^k$ of dimension $k-m-1$, where $m$ is the minimum of dimensions of linear subvarieties $L$ of $\mathbb P^k$ containing $S$. \qed

\begin{lem} \label{proj1} Let $\overline S\subset \mathbb P^k$ be a projective variety, $\dim \overline S=k-2$. Then there exists an every dense Zariski  open  subset $G_{\overline S}\subset \text{Gr}(k,1)$ of lines in $\mathbb P^k$ that do not meet $\overline S$. 
\end{lem}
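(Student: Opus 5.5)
We argue by the same incidence-correspondence dimension count as in the proof of Lemma \ref{dim2}. Here $\text{Gr}(k,1)$ denotes the Grassmannian of lines in $\mathbb P^k$, and $\dim \text{Gr}(k,1)=2(k-1)$. Consider the incidence variety
$$\mathcal I=\{(p,\ell)\in \overline S\times \text{Gr}(k,1)\,\mid\, p\in \ell\}$$
with its two projections $\psi_1:\mathcal I\to\overline S$ and $\psi_2:\mathcal I\to \text{Gr}(k,1)$. The variety $\mathcal I$ is closed in $\overline S\times\text{Gr}(k,1)$. Since $\text{Gr}(k,1)$ is projective, the image $\psi_2(\mathcal I)$ is a closed subset of $\text{Gr}(k,1)$. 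This image is exactly the set of lines meeting $\overline S$, so its complement $G_{\overline S}$ is Zariski open.

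It remains to show that $G_{\overline S}$ is nonempty; density then follows because $\text{Gr}(k,1)$ is irreducible. The fibre $\psi_1^{-1}(p)$ is the family of lines through a fixed point $p$. This family is isomorphic to $\mathbb P^{k-1}$ and has dimension $k-1$. Applying the fibre-dimension theorem to each irreducible component of $\overline S$ (all of dimension at most $k-2$), we get
$$\dim\mathcal I\le (k-2)+(k-1)=2k-3<2k-2=\dim\text{Gr}(k,1).$$
Hence $\dim\psi_2(\mathcal I)\le 2k-3$, so $\psi_2(\mathcal I)$ is a proper closed subset. Therefore $G_{\overline S}=\text{Gr}(k,1)\setminus\psi_2(\mathcal I)$ is a nonempty, everywhere dense Zariski open subset.

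Alternatively, one could exhibit a line disjoint from $\overline S$ directly. Take a general linear projection from a point $q\notin\overline S$. The image of $\overline S$ in $\mathbb P^{k-1}$ has dimension $k-2<k-1$, so we may choose a point of $\mathbb P^{k-1}$ outside this image. The line through $q$ corresponding to that point misses $\overline S$. The incidence-variety argument is cleaner, though, and it gives openness at the same time.

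The only point needing care is the closedness of $\psi_2(\mathcal I)$. This holds because $\overline S$ is projective, so $\mathcal I$ is complete and the projection $\psi_2$ is a proper map. The same argument works if $\overline S$ is reducible, with the dimension count applied component by component.
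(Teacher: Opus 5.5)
Your proposal is correct and matches the paper's proof. Both use the incidence correspondence of pairs $(p,\ell)$ with $p\in\ell\cap\overline S$, fibres $\cong\mathbb P^{k-1}$ over $\overline S$, and the count $\dim\le 2k-3<2k-2=\dim\text{Gr}(k,1)$. You also make explicit that the image is closed, using the completeness of $\overline S$, which the paper leaves implicit.
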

\proof Denote $G=\{ (p,L)\in S\times \text{Gr}(k,1)\, \mid \, p\in L\cap S\}$ and let $\varphi_1:G\to S$, $\varphi_2:G \to \text{Gr}(k,1)$ be the restrictions to $G$ of the projections $\text{pr}_1:S\times \text{Gr}(k,1)\to S$, $\text{pr}_2:S\times \text{Gr}(k,1) \to \text{Gr}(k,1)$. Then 
$\dim \varphi_1^{-1}(p)=k-1$ for each point $p\in S$ and hence, $\dim G=2k-3<\dim \text{Gr}(k,1)$. Therefore $G_S=\text{Gr}(k,1)\setminus \varphi_2(G)$ is a desired open subset of $\text{Gr}(k,1)$. \qed
 
\begin{lem}\label{proj2} Let $G_1\subset \text{Gr}(k,1)$ be an everywhere dense Zariski open subset of lines in $\mathbb P^k$ and $G_2$ an everywhere dense Zariski open subset of hyperplanes in $\mathbb P^k$. Then there exist a line $L\in G_1$ and hyperplanes $\Pi_1,\dots, \Pi_{k-1}\in G_2$ such that $\displaystyle L=\bigcap_{i=1}^{k-1}\Pi_i$.
\end{lem}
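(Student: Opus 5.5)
We argue with an incidence correspondence and a dimension count, as in Lemmas \ref{dim2} and \ref{proj1}. Put $\mathcal F=(\hat{\mathbb P}^k)^{k-1}$; it is irreducible of dimension $k(k-1)$. Let $\mathcal F_0\subset\mathcal F$ be the set of tuples $(\Pi_1,\dots,\Pi_{k-1})$ whose linear forms are linearly independent. This is a nonempty Zariski open subset, cut out by the nonvanishing of some maximal minor of the coefficient matrix. On $\mathcal F_0$ the intersection $\bigcap_{i=1}^{k-1}\Pi_i$ is a line, so we get a morphism
$$\Phi:\mathcal F_0\to \text{Gr}(k,1),\qquad (\Pi_1,\dots,\Pi_{k-1})\mapsto \bigcap_{i=1}^{k-1}\Pi_i .$$
It is a morphism because in Plücker coordinates the line corresponds to the $(k-1)$-fold wedge of the linear forms, which is polynomial in their coefficients.

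The first step is to show that $\Phi$ is surjective, and in fact dominant, which is all we need. Take any line $L\in\text{Gr}(k,1)$. The hyperplanes containing $L$ form a linear subspace $\hat L\cong\mathbb P^{k-2}$ of $\hat{\mathbb P}^k$. Choosing $k-1$ linearly independent forms in $\hat L$ gives a point of $\mathcal F_0$ mapped to $L$. Hence $\Phi(\mathcal F_0)=\text{Gr}(k,1)$.

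Next, $G_1$ is a nonempty Zariski open subset of the irreducible variety $\text{Gr}(k,1)$. So $\Phi^{-1}(G_1)$ is Zariski open in $\mathcal F_0$, and it is nonempty by surjectivity. Since $G_2$ is a nonempty Zariski open subset of $\hat{\mathbb P}^k$, the product $G_2^{k-1}$ is a nonempty Zariski open subset of $\mathcal F$. Because $\mathcal F$ is irreducible, the two nonempty open sets $\Phi^{-1}(G_1)$ and $G_2^{k-1}\cap\mathcal F_0$ intersect, and in fact their intersection is dense. Any tuple $(\Pi_1,\dots,\Pi_{k-1})$ in the intersection gives hyperplanes $\Pi_i\in G_2$ with $L=\bigcap\Pi_i\in G_1$, which proves the lemma.

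There is no serious obstacle here. The only points needing care are that $\Phi$ is a morphism on $\mathcal F_0$, handled by the Plücker description above, and that the nonemptiness of $\Phi^{-1}(G_1)$ uses surjectivity rather than only dominance. The case $k=2$ is trivial, since then $\mathcal F=\hat{\mathbb P}^2\cong\text{Gr}(2,1)$ and $L=\Pi_1$ ranges over $G_1\cap G_2$, which is nonempty.
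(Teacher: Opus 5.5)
Your proof is correct and is the paper's argument: the intersection map from the open set of independent $(k-1)$-tuples in $(\hat{\mathbb P}^k)^{k-1}$ onto $\text{Gr}(k,1)$ is surjective, so $\Phi^{-1}(G_1)$ and $G_2^{k-1}$ are two nonempty open subsets of an irreducible variety and must meet. One small inaccuracy: your closing remark that nonemptiness of $\Phi^{-1}(G_1)$ needs surjectivity rather than dominance is wrong (and contradicts your earlier sentence), since a dense image already meets every nonempty open set.
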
 
\proof Obviously, the product $G_2^{k-1}$ of $k-1$ copies of $G_2$ is an everywhere dense Zariski open subset of the product $\hat{\mathbb P}^{k^{k-1}}$ of $k-1$ copies of the projective space  $\hat{\mathbb P}^{k}$. 

Let $a_{i,1}x_1+\dots +a_{i,k+1}x_{i,k+1}=0$, $i=1,\dots,k-1$, be an equation of a hyperplane $\Pi_i$ in $\mathbb P^k$. Denote by 
$ r(\Pi_1,\dots, \Pi_{k-1})$ rank of the matrix   
$$ A=\left( \begin{array}{ccc} a_{1,1}, & \cdots &  a_{1,k+1} \\ 
\vdots & \vdots & \vdots \\ 
a_{k-1,1} & \cdots   & a_{k-1,k+1} \end{array} \right) $$ 
and $C_2=\{ (\Pi_1,\dots,\Pi_{k-1})\in \hat{\mathbb P}^{k^{k-1}}=\hat{\mathbb P}^k\times\dots\times \hat{\mathbb P}^k\, \mid\, r(\Pi_1,\dots,\Pi_{k-1})<k-1\}$. Consider a morphism $\lambda:\hat{\mathbb P}^{k^{k-1}}\setminus C_2\to \text{Gr}(k,1)$ sending $(\Pi_1,\dots, \Pi_{k-1})$ to $L=\cap_{i=1}^{k-1}\Pi_i$. Obviously, $\lambda$ is a surjection.  
Therefore $\lambda^{-1}(G_1)$ is an everywhere dense Zariski open subset of $\hat{\mathbb P}^{k^{k-1}}$ and hence, $G_2^{k-1}\cap \lambda^{-1}(G_1)$ is an everywhere dense Zariski open subset of $\hat{\mathbb P}^{k^{k-1}}$. In particular, $G_2^{k-1}\cap \lambda^{-1}(G_1)\neq \emptyset$. Therefore a point $(\Pi_1,\dots,\Pi_{k-1})\in G_2^{k-1}\cap \lambda^{-1}(G_1)$ and $L=\lambda(\Pi_1,\dots,\Pi_{k-1})$ are desired hyperplanes and a line in $\mathbb P^k$. \qed 

\begin{lem} \label{proj3} Let $\overline S\subset \mathbb P^k$, $L\in G_{\overline S}=G_1\subset \text{Gr}(k,1)$ and $\Pi_1,\dots, \Pi_{k-1}\in G_2\subset \hat{\mathbb P}^k$  be the same as in Lemmas  \ref{proj1} and \ref{proj2} and let  $S$ be an everywhere dense Zariski open subvariety of $\overline S$. Then there is  a quasi-projective variety $S_{L}\subset S$ that is closed in $S$, $\dim S_{L}\leq k-3$, and such that 
for each point $p\in \mathcal S=S\setminus S_{L}$ there is a hyperplane $\Pi\in G_2$ meeting $\mathcal S$ at the point $p$. 
\end{lem}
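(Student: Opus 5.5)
The plan is to reduce the statement to a dimension count in the dual space $\hat{\mathbb P}^k$. I would take $\mathcal D=\hat{\mathbb P}^k\setminus G_2$, which is a proper Zariski closed subset of $\hat{\mathbb P}^k$ because $G_2$ is everywhere dense and open, so $\dim\mathcal D\leq k-1$. For a point $p\in\mathbb P^k$, let $H_p\subset\hat{\mathbb P}^k$ be the hyperplane of the dual space consisting of all hyperplanes $\Pi\subset\mathbb P^k$ passing through $p$. A hyperplane $\Pi\in G_2$ with $p\in\Pi\cap\mathcal S$ exists exactly when $H_p\not\subset\mathcal D$. So it suffices to show that the bad set $S_L=\{p\in S\mid H_p\subset\mathcal D\}$ is closed in $S$ and has dimension $\leq k-3$. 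Throughout I assume $k\geq 3$, as in Lemma \ref{dim2}.

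First I would check closedness. Consider the incidence variety $I=\{(p,\Pi)\in S\times\hat{\mathbb P}^k\mid p\in\Pi\}$, which is a $\mathbb P^{k-1}$-bundle over $S$. The condition $H_p\subset\mathcal D$ says that the whole fibre of $I\to S$ over $p$ lies in the closed set $S\times\mathcal D$. The set of such $p$ is closed, for instance because it is the complement of the image of the open set $I\setminus(S\times\mathcal D)$ under the open (flat) projection $I\to S$. Hence $S_L$ is closed in $S$.

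Second I would bound its dimension. If $H_p\subset\mathcal D$, then $H_p$ is an irreducible component of $\mathcal D$, since $\dim H_p=k-1\geq\dim\mathcal D$. The closed set $\mathcal D$ has only finitely many irreducible components, and the map $p\mapsto H_p$ is injective by projective duality. Therefore $S_L$ is a finite set, so $\dim S_L\leq 0\leq k-3$. Setting $\mathcal S=S\setminus S_L$, for every $p\in\mathcal S$ the open set $G_2\cap H_p$ is nonempty, and any $\Pi$ in it is a hyperplane of $G_2$ meeting $\mathcal S$ at $p$.

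The main obstacle is interpretive rather than technical: in the later use, the hyperplane $\Pi$ is presumably required to be compatible with the fixed line $L=\bigcap\Pi_i$, for example to contain $L$ so that it belongs to the pencil structure used in the inductive definition. If that is needed, I would run the same argument with $H_p$ replaced by the linear system $\Lambda_p\cong\mathbb P^{k-3}$ of hyperplanes containing the plane $\langle L,p\rangle$. This plane is genuinely $2$-dimensional because $L\cap\overline S=\emptyset$. I would then consider the incidence $\{(p,\Pi)\mid\Pi\supset\langle L,p\rangle,\ \Pi\in\mathcal D\}$ and count dimensions over the $\mathbb P^{k-2}$ of hyperplanes through $L$, intersected with $\mathcal D$. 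The dimension count is expected to show that the $p$ with $\Lambda_p\subset\mathcal D$ form a closed subset of dimension $\leq k-3$, because such $p$ lie on $\overline S$ and project from $L$ into a proper closed subset of $\mathbb P^{k-2}$. This is the step I would expect to need the most care.
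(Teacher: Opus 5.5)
Your argument is correct for the lemma as stated (with $k\geq 3$), and it is simpler than the paper's.

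The paper does not use all hyperplanes through $p$. It uses only the linear system $\mathcal P_L\cong\hat{\mathbb P}^{k-2}$ of hyperplanes containing $L$. This system meets $G_2$ because $\Pi_1,\dots,\Pi_{k-1}\in G_2$, and $L\cap\overline S=\emptyset$ makes $\phi=\text{pr}_L|_{\overline S}:\overline S\to\mathbb P^{k-2}$ finite. The members of $\mathcal P_L$ through $p$ form the hyperplane $\mathcal P_{\phi(p)}$ of $\mathcal P_L$. Your component count, done inside $\mathcal P_L$ (where $\mathcal P_L\setminus G_2$ is a proper closed subset since it misses $\Pi_1$), shows that $\mathcal P_{\phi(p)}\cap G_2=\emptyset$ only for $\phi(p)$ in a finite set $N$. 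The paper then takes $S_L=S\cap\phi^{-1}(B_\phi\cup N)$, and the bound $k-3$ comes entirely from $\phi^{-1}(B_\phi)$.

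What each route buys:
\begin{itemize}
\item Your route dispenses with $L$, the $\Pi_i$ and $\phi$, and even yields a finite exceptional set, so your closedness step is automatic.
\item The paper's route gives more than the statement records. The hyperplane can be taken through $L$, and at a smooth point $p\in S\setminus\phi^{-1}(B_\phi)$ the map $\phi$ is unramified, so every hyperplane through $L$ and $p$ meets $S$ transversally at $p$. This is presumably why $\phi^{-1}(B_\phi)$ is removed, given the transversality demanded in condition $(\mathrm{ii})_2$ of the definition of $ind$-generic covers.
\end{itemize}
In the paper's only application (Theorem \ref{thm2}, where $G_2=\mathcal G_f\subset\mathcal G_{f,2}$), transversality is already built into $G_2$, so your version suffices there.

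Finally, the variant in your last paragraph, which you flagged as delicate, is exactly the paper's argument and is immediate. Since $\Lambda_p=\mathcal P_{\phi(p)}$, the bad points $p$ lie in $\phi^{-1}(N)$, which is finite because $N$ is finite and $\phi$ is finite.
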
 
\proof  Let a finite morphism $\phi :\overline S\to  \mathbb P^{k-2}$ be the restriction to $\overline S$ of the linear projection $\text{pr}_L:\mathbb P^k\to \mathbb P^{k-2}$ with center in $L$ and $B_{\phi}$ be  its discriminant,  $\dim \psi^{-1}(B_{\psi})=k-3$.  

Denote  $\mathcal P_L$ the linear subspace of $\hat{\mathbb P}^k$ containing the set $\{ \Pi_1,\dots, \Pi_{k-1}\}$. We have  
$\mathcal P_L\simeq\hat{\mathbb P}^{k-2}$. For a point $p\in \mathbb P^{k-2}$ denote $\mathcal P_p=\{ \Pi\in \mathcal P_L 
\mid p\in \text{pr}_L(\Pi)\}$. Then  $\mathcal P_p$ is a hyperplane in $\mathcal P_L$. 

The set $\mathcal P_L\cap G_2$ is an evrywhere dense Zariski open subset of $\mathcal P_L$. Therefore $\widetilde G_2=\{ \widetilde{\Pi}\in \hat{\mathbb P}^{k-2}\, \mid\, \text{pr}_L^{-1}(\widetilde{\Pi})\in \mathcal P_L\cap G_2\}$ is an evrywhere dense Zariski open subset of $\hat{\mathbb  P}^{k-2}\simeq \mathcal P_L$.
Hence, $N=\{ p\in \mathbb P^{k-2}\, \mid\, \mathcal P_p\cap (\mathcal P_L\cap G_2)=\emptyset \}$ is a finite set of points (may be empty). Then $S_{L}=S\cap\psi^{-1}(B_{\phi}\cup N)$ is a desired set. \qed 

\subsection{$Ind$-generic covers of the projective spaces.} \label{ind-cov} Note that if $f: X \to \mathbb P^2$ is a generic cover of the projective plane, then there exists an everywhere dense open subset $\mathcal G_f$ of the set $\hat{\mathbb P}^2$ of lines in $\mathbb P^2$, namely $\mathcal G_f=\hat{\mathbb P}^2\setminus \hat B_f$, such that $f_{|X_{\Pi}}:X_{\Pi}\to\Pi=\mathbb P^1$ is a generic cover of $\mathbb P^1$  for all lines $\Pi\in\mathcal G_f$, where $X_{\Pi}=f^{-1}(\Pi)$ and $\hat B_f\subset \hat{\mathbb P}^2$ is the dual curve of the discriminant $B_f$. 
We use this property to define the next subset of the set of finite morphisms from nonsingular projective varieties to projective spaces.

Let $X$ be a nonsingular projective variety, $\dim X=k$. A finite morphism $f: X\to\mathbb P^k$ is said to be {\it$ind$-generic cover of $\mathbb P^k$,} if 
\begin{itemize} \item[($\text{\bf i}$)] in the case of $k\leq 2$, the morphism $f$ is a generic cover, 
\item[($\text{\bf ii}$)] in the case of  $k\geq 3$, there exist  an everywhere dense Zariski open subset $\mathcal S_{f}$ of $\text{Sing}\, B_f$ containing only nonsingular points of $\text{Sing}\, B_f$ and an everywhere dense Zariski open subset $\mathcal G_f\subset Gr_{k,k-1}=\hat{\mathbb P}^k$ such that 
    \begin{itemize} \item[$(\text{\bf ii})_1$] $f_{|X_{\Pi}}:X_{\Pi}=f^{-1}(\Pi)\to \Pi=\mathbb P^{k-1}$ are $ind$-generic covers for all hyperplanes  $\Pi\in\mathcal G_f\subset\hat{\mathbb P}^k$, 
    \item[$(\text{\bf ii})_2$] $\mathcal S_{f_{\mid X_{\Pi}}}=\mathcal S_f\cap X_{\Pi}$ for each hyperplane $\Pi\in \mathcal G_f$ and 
    for each point $p\in\mathcal S_f$ there exists a hyperplane $\Pi\in \mathcal G_f$ meeting transversally $\mathcal S_f$ at the point $p$ 
        (if $k=3$ then $\mathcal S_{f_{\mid X_{\Pi}}}=\text{Sing}\, B_{f_{\mid X_{\Pi}}}$).
    \end{itemize}
\end{itemize}

\begin{lem}\label{irred} The discriminant $B_f$ of an $ind$-generic cover $f:X\to\mathbb P^k$ is an irreducible variety if $k\geq 2$. \end{lem}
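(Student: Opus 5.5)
We argue by induction on $k$. The base case $k=2$ is immediate: an $ind$-generic cover of $\mathbb P^2$ is, by $(\text{\bf i})$, a generic cover of the projective plane, and irreducibility of $B_f$ is part of that definition (condition (ii)). Alternatively, Proposition \ref{irred} shows it follows from the local conditions.

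For the inductive step, let $k\geq 3$ and suppose the statement holds in dimension $k-1$. Assume $B_f=B_1\cup B_2$ with $B_1,B_2$ nonempty closed subvarieties of $\mathbb P^k$ of pure codimension one, neither contained in the other. By Lemma \ref{dim1}, the hyperplanes containing no irreducible component of $B_f$ form a dense Zariski open subset. Intersecting it with $\mathcal G_f$ gives a nonempty open set, so we may choose $\Pi\in\mathcal G_f$ that contains no component of $B_1$ or $B_2$. Then $B_1\cap\Pi$ and $B_2\cap\Pi$ are nonempty hypersurfaces in $\Pi\simeq\mathbb P^{k-1}$ (nonempty because every hypersurface meets every hyperplane when $k\geq 2$). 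Each $B_i\cap \Pi$ is also not contained in the other.

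We next need $B_{f_{\mid X_\Pi}}=B_f\cap\Pi$. Over a point $p\in\Pi\setminus B_f$ the fibre has $d$ points, so $p\notin B_{f_{|X_\Pi}}$. Conversely, over $p\in B_f\cap\Pi$ the fibre $f^{-1}(p)$ has fewer than $d$ points, and this fibre is the same set for $f_{|X_\Pi}$. Hence $B_{f_{|X_\Pi}}=B_f\cap \Pi=(B_1\cap\Pi)\cup(B_2\cap\Pi)$. This decomposition is nontrivial, since a component of $B_1\cap\Pi$ lying in $B_2$ would be a codimension-two subset of $\mathbb P^k$, and we can arrange that no component of $B_1\cap \Pi$ lies in $B_1\cap B_2$.

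The main obstacle is this last point: we must exclude that $B_i\cap\Pi\subset B_j\cap\Pi$. Since $\dim(B_1\cap B_2)=k-2$, a general hyperplane meets it in dimension $k-3$. The sets $B_i\cap \Pi$, by contrast, have dimension $k-2$, so each $B_i\cap\Pi$ has a component not contained in $B_1\cap B_2$. We therefore shrink $\mathcal G_f$ by a further open condition, namely $\Pi\not\supset$ any component of $B_1\cap B_2$, which is again dense open by Lemma \ref{dim1}.

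The discriminant of the $ind$-generic cover $f_{|X_\Pi}$ (condition $(\text{\bf ii})_1$) is then reducible, which contradicts the induction hypothesis. Hence $B_f$ is irreducible.
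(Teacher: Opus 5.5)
Your proof is correct and follows essentially the same route as the paper: induction on $k$, a choice via Lemma \ref{dim1} of $\Pi\in\mathcal G_f$ containing no component of $B_1\cap B_2$, and a contradiction with the irreducibility of $B_{f_{\mid X_\Pi}}=B_f\cap\Pi$. The only thing to tighten is to take $B_1,B_2$ to be two distinct irreducible components, as the paper does (or $B_1$ irreducible and $B_2$ the union of the remaining components); otherwise shared components could make $\dim(B_1\cap B_2)=k-1$. Your explicit check that $B_{f_{\mid X_\Pi}}=B_f\cap\Pi$ fills in a step the paper leaves implicit.
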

\proof For $k=2$, the discriminant  $B_f$ of an $ind$-generic cover $f:X\to\mathbb P^3$ is an irreducible variety by definition of generic covers of $\mathbb P^2$. 

For $k\geq 3$, assume that the discriminant $B_f$ of an $ind$-generic cover $f:X\to\mathbb P^k$ is not irreducible, but for all $ind$-generic covers of $\mathbb P^{k-1}$ their discriminants are irreducible varieties. Let $B_1$ and $B_2$ be two irreducible components of $B_f$. 

By definition of $ind$-generic covers, there is  an everywhere  dense set $\mathcal G_f\subset Gr_{k,k-1}=\hat{\mathbb P}^k$ such that $f_{|X_{\Pi}}:X_{\Pi}\to \Pi=\mathbb P^{k-1}$ are $ind$-generic covers for all $\Pi\in\mathcal G_f\subset\hat{\mathbb P}^k$. Therefore the discriminants $B_{f_{\mid X_{\Pi}}}=B_f\cap \Pi$ are irreducible varieties for all $\Pi\in\mathcal G_f$. 

On the other  hand, since $\mathcal G_f$ is  an everywhere  dense subset of  $\hat{\mathbb P}^k$, then, by Lemma  \ref{dim1},  there is $\Pi\in\mathcal G_f$ such that $\dim B_{f}\cap\Pi=k-2$ and  each irreducible component of $B_1\cap B_2$ is not contained in $\Pi$. Therefore $B_{f_{\mid X_{\Pi}}}$ can not be an irreducible variety, since   $B_1\cap \Pi\neq B_2\cap\Pi$ and $B_i\cap \Pi$, $i=1,2$, are hypersurfaces in $\Pi$. \qed

\begin{lem}\label{coef} Let $f:X\to\mathbb P^k$  be an $ind$-generic cover. Then $f^*(B_f)=2R_f+C_f$ and $f_{\mid R_f}:R_f\to B_f$ is a birational morphism.
\end{lem}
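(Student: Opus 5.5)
We argue by induction on $k$. For $k\leq 2$ the claim is part of the definition of generic covers, so assume $k\geq 3$ and that the lemma holds for $ind$-generic covers of $\mathbb P^{k-1}$. Write
$$f^*(B_f)=\sum_j r_jR_j+C_f,$$
with $R_j$ the irreducible components of the ramification locus and $C_f$ reduced. We need two facts: every $r_j$ equals $2$, and $f_{\mid R_f}:R_f\to B_f$ is birational. By Lemma \ref{irred}, $B_f$ is irreducible.

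Both facts are local along a general point of each component, so we test them on a general hyperplane section.

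\textbf{Step 1 (choice of hyperplane).} Apply Lemma \ref{dim1}, together with the standard Bertini-type fact that a general hyperplane meets a variety properly, and intersect the resulting open sets with $\mathcal G_f$. This yields $\Pi\in\mathcal G_f$ such that:
\begin{itemize}
\item[(a)] $\Pi$ contains no component of $B_f$, of $\text{Sing}\, B_f$, or of $f(\text{Sing}\,R_f)$;
\item[(b)] for each $j$, the intersection $R_j\cap X_\Pi$ is nonempty of dimension $k-2$;
\item[(c)] $X_\Pi=f^{-1}(\Pi)$ is smooth.
\end{itemize}
Condition (c) is automatic, because $f_{\mid X_\Pi}$ is $ind$-generic by $(\text{\bf ii})_1$, and that notion presupposes a nonsingular source.

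\textbf{Step 2 (multiplicities).} The pullback commutes with restriction. Since $\Pi$ is transversal to $B_f$ at general points,
$$B_{f_{\mid X_\Pi}}=B_f\cap\Pi.$$
Restricting divisors therefore gives
$$(f_{\mid X_\Pi})^*(B_f\cap\Pi)=\sum_j r_j (R_j\cap X_\Pi)+C_f\cap X_\Pi.$$
Here is the point needing care. Because $X_\Pi$ is nonsingular and $\Pi$ is general, $R_j\cap X_\Pi$ is reduced at its general points: a general hyperplane section of a reduced divisor is reduced, by Bertini applied to the morphism $R_j\to\mathbb P^k$. Hence the coefficient of every component of $R_j\cap X_\Pi$ is exactly $r_j$. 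Moreover, these components still lie in the ramification locus of $f_{\mid X_\Pi}$, so the induction hypothesis
$$(f_{\mid X_\Pi})^*(B_{f_{\mid X_\Pi}})=2R_{f_{\mid X_\Pi}}+C_{f_{\mid X_\Pi}}$$
forces $r_j=2$ for all $j$.

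\textbf{Step 3 (birationality).} The same bookkeeping shows $R_{f_{\mid X_\Pi}}=R_f\cap X_\Pi$, so it remains to show $\deg(f_{\mid R_f}:R_f\to B_f)=1$. There are two things to check.
\begin{itemize}
\item If $R_f$ had two components $R_1,R_2$ mapping onto $B_f$, then $R_1\cap X_\Pi$ and $R_2\cap X_\Pi$ would give two components of $R_{f_{\mid X_\Pi}}$ over the irreducible $B_{f_{\mid X_\Pi}}$. This contradicts birationality of $f_{\mid X_\Pi}$ on its ramification locus. Note that $B_{f_{\mid X_\Pi}}$ is irreducible by Lemma \ref{irred} applied in dimension $k-1$.
\item If $R_f$ is irreducible, then the degree of $R_f\to B_f$ equals the degree of its restriction over a general $\Pi$, because a general point of $B_f\cap\Pi$ is a general point of $B_f$. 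That restricted degree is $1$ by induction.
\end{itemize}
Either way, $f_{\mid R_f}$ is birational.

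\textbf{Main obstacle.} The delicate step is the divisor restriction in Step 2: we must make sure that components of $C_f$ and of the $R_j$ do not merge or acquire extra multiplicity in the section, and that $\Pi\cap B_f$ is exactly the discriminant of the restricted cover. I would handle this by working over the open set of $B_f$ where $B_f$ is smooth and $f^{-1}(B_f)$ is a disjoint union of smooth pieces, each mapping with constant local degree. A general $\Pi$ meets this open set densely in $B_f\cap\Pi$, and there both multiplicities and degrees are visibly preserved.
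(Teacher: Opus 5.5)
Your proof is correct. It has the same skeleton as the paper's: induction on $k$, a general $\Pi\in\mathcal G_f$, and the induction hypothesis for $f_{\mid X_\Pi}$. The difference is how you extract the conclusion.

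The paper uses one numerical identity. By the projection formula over the irreducible $B_f$, $\deg f=\sum r_i\deg f_{\mid R_{f,i}}+\deg f_{\mid C_f}$, and the induction hypothesis gives $\deg f_{\mid X_\Pi}=2+\deg f_{\mid C_{f_{\mid X_\Pi}}}$. Since $\deg f=\deg f_{\mid X_\Pi}$ and $C_{f_{\mid X_\Pi}}=C_f\cap X_\Pi$ has the same degree over the discriminant as $C_f$, this yields $\sum r_i\deg f_{\mid R_{f,i}}=2$. Then $r_i\geq 2$ forces a single component with $r=2$ and degree $1$, so both claims come out at once.

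You instead match coefficients component by component and treat birationality separately (two components versus one). The genericity bookkeeping is the same in kind, but for you it sits on the $R_f$ side, and for the paper on the $C_f$ side (reducedness and degree of $C_f\cap X_\Pi$).

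Your ``delicate'' step is easier than you make it. Any component $\Gamma$ of $R_j\cap X_\Pi$ occurs in $(f_{\mid X_\Pi})^*(B_f\cap\Pi)$ with coefficient at least $r_j\geq 2$. Hence $\Gamma$ lies in $R_{f_{\mid X_\Pi}}$ and has coefficient exactly $2$. This gives at once $r_j=2$, reducedness of $R_j\cap X_\Pi$ along $\Gamma$, and that $\Gamma$ is shared neither with another $R_{j'}$ nor with $C_f$. So you do not need Bertini for $R_j$.

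What you do need is that $B_f\cap\Pi$ is reduced as a divisor on $\Pi$. The set-theoretic equality $B_{f_{\mid X_\Pi}}=B_f\cap\Pi$ holds for every $\Pi\not\subset B_f$; transversality is what makes the two pullbacks agree as divisors. You also need that the degree of $R_f\to B_f$ is preserved under a general section, which your irreducible case uses.
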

\proof Let  $f^*(B_f)= \sum r_iR_{f,i}+C_f$, where $R_{f,i}$ are irreducible components of the ramification divisor of $f$. We have 
\begin{equation}\label{eq1}\deg f=\sum r_i\deg f_{\mid R_{f,i}}+\deg f_{\mid C_f}.
\end{equation}

Using induction on $k$, we can assume that Lemma \ref{coef} is true for  ind-generic covers of the projective hyperplane $\mathbb P^{k-1}$ and, in particular, it is true for  $f_{\mid X_{\Pi}}: X_{\Pi}\to\Pi=\mathbb P^{k-1}$, where $\Pi\in\mathcal G_f$. Therefore $f^*_{\mid X_{\Pi}}(B_{f_{\mid X_{\Pi}}})= 2R_{f_{\mid X_{\Pi}}}+C_{f_{\mid X_{\Pi}}}$ and $f_{\mid X_{\Pi}}:R_{f_{\mid X_{\Pi}}}\to B_{f_{\mid X_{\Pi}}}$ is a birational morphism. 

We have $B_{f_{\mid \Pi}}=B_f\cap \Pi$, $R_{f_{\mid X_{\Pi}}}=R_f\cap X_{\Pi}$, $C_{f_{\mid X_{\Pi}}}=C_f\cap X_{\Pi}$, and 
$$\deg f_{\mid X_{\Pi}}=2 +\deg f_{\mid C_{f_{\mid X_{\Pi}}}}.$$
Therefore it follows from (\ref{eq1}) that $f_{\mid R_f}:R_{f}\to B_f$ is a birational morphism and $f^*(B_f)=2R_f+C_f$, since $\deg f=\deg f_{\mid X_{\Pi}}$ and $\deg f_{\mid C_f}=\deg f_{\mid C_{f_{\mid X_{\Pi}}}}$.

\begin{thm} \label{thm2} A cover $f:X\to\mathbb P^k$, generic outside of codimension three, is an $ind$-generic cover.
\end{thm}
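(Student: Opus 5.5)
We argue by induction on $k$. For $k=2$ a cover generic outside of codimension three is a generic cover of $\mathbb P^2$: here $\dim S_{f,3}\leq -1$, so $S_{f,3}=\emptyset$ and $\text{Sing}\, B_f$ consists of ordinary nodes and cusps. Conditions (i) and (ii) coincide with the definition, so $f$ is $ind$-generic by ($\text{\bf i}$). For the inductive step, let $k\geq 3$ and assume that every cover of $\mathbb P^{k-1}$ generic outside of codimension three is $ind$-generic. We set $\mathcal S_f:=S_{f,2}\setminus \text{Sing}(S_{f,2})$, which is an everywhere dense Zariski open subset of $\text{Sing}\, B_f$ consisting of nonsingular points. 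It then suffices to construct a dense open $\mathcal G_f\subset\hat{\mathbb P}^k$ such that, for every $\Pi\in\mathcal G_f$, the restriction $f_{\mid X_\Pi}$ is again generic outside of codimension three. The induction hypothesis then gives $(\text{\bf ii})_1$.

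We define $\mathcal G_f$ as the intersection of finitely many dense open sets.
\begin{itemize}
\item[(a)] By Bertini, $X_\Pi$ is nonsingular and irreducible for generic $\Pi$ (irreducible since $k-1\geq 2$).
\item[(b)] By Lemma \ref{dim1}, $\Pi$ contains no irreducible component of $B_f$, $R_f$, $C_f$, $\overline{S_{f,2}}$, $S_{f,3}$, or of the loci $\text{Sing}(S_{f,2})$, $\overline{S_{f,2}}\setminus S_{f,2}$.
\item[(c)] By Lemma \ref{dim2} applied to the nonsingular $(k-2)$-dimensional variety $\mathcal S_f$, $\Pi$ meets $\mathcal S_f$ transversally. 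By the analogous Bertini/dimension count for the open part of $B_f$, $\Pi$ meets $B_f\setminus\text{Sing}\,B_f$ transversally.
\end{itemize}

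For $\Pi\in\mathcal G_f$ we then verify the three conditions for $f_\Pi:=f_{\mid X_\Pi}$.

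\textbf{Condition (i).} Pulling back along $X_\Pi\hookrightarrow X$ gives $f_\Pi^*(B_f\cap\Pi)=2(R_f\cap X_\Pi)+C_f\cap X_\Pi$. Transversality makes these restrictions reduced, and $B_{f_\Pi}=B_f\cap\Pi$.

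\textbf{Condition (ii).} Birationality of $R_f\cap X_\Pi\to B_f\cap\Pi$ follows because $\Pi$ avoids the proper closed locus where $f_{\mid R_f}$ fails to be an isomorphism. Irreducibility of $B_f\cap\Pi$ follows from Bertini's irreducibility theorem, since $\dim B_f=k-1\geq 2$.

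\textbf{Condition (iii).} At a point $p\in\mathcal S_f\cap\Pi$ with $\Pi$ transversal to $\mathcal S_f$, we use the product structure $V_{p,\varepsilon}=\mathcal B_\varepsilon\times\mathcal D^{k-2}_\varepsilon$ of \S\ref{loc}. The hyperplane $\Pi$ is transversal to $\{o\}\times\mathcal D^{k-2}_\varepsilon$ and can be written as a graph over $\mathcal B_\varepsilon\times\mathcal D^{k-3}$. After a coordinate change in the $\mathcal D$-directions, $B_f\cap\Pi$ is again a $(k-3)$-node or $(k-3)$-cusp. We then set
$$S_{f_\Pi,3}:=(S_{f,3}\cup\text{Sing}(S_{f,2})\cup\overline{S_{f,2}}\setminus S_{f,2})\cap\Pi,$$
which has dimension $\leq k-4$ by (b). Every remaining point of $\text{Sing}(B_f\cap\Pi)$ lies in $\mathcal S_f\cap\Pi$. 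To see this, we must check that $\text{Sing}(B_f\cap\Pi)=(\text{Sing}\,B_f\cap\Pi)\cup\{\text{tangency points}\}$, and that the tangency points are excluded by (c).

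For $(\text{\bf ii})_2$, we get $\mathcal S_{f_\Pi}=\mathcal S_f\cap X_\Pi$ by taking the nonsingular part at the next level consistently. This requires $\text{Sing}(\mathcal S_f\cap\Pi)=\emptyset$, which holds by transversality. Existence, for each $p\in\mathcal S_f$, of a transversal hyperplane in $\mathcal G_f$ is exactly Lemma \ref{proj3} combined with Lemmas \ref{proj1} and \ref{proj2}. If Lemma \ref{proj3} forces removing a $(k-3)$-dimensional set $S_L$, we shrink $\mathcal S_f$ accordingly; it stays dense and open.

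\textbf{Main obstacle.} The hardest step is the local claim in (iii), namely that a hyperplane transversal to $S_{f,2}$ but possibly tangent to $B_f$ elsewhere produces no new singularities. The tangency locus must be shown to be non-generic. The first thing to try is a dual-variety dimension count like the one in Lemma \ref{dim2}, applied to the incidence of $\Pi$ tangent to the smooth part of $B_f$: the dual of $B_f$ has dimension $\leq k-1$, so a generic $\Pi$ is not tangent to $B_f$ at any smooth point.
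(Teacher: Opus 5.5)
Your proposal is correct and follows the paper's proof essentially step for step: induction on $k$, a dense open $\mathcal G_f$ cut out by Bertini and Lemmas \ref{dim1}, \ref{dim2}, the graph argument at points of $S_{f,2}$, and Lemma \ref{proj3} for $(\text{\bf ii})_2$. The deviations are minor: you exclude tangency of $\Pi$ to the smooth part of $B_f$ by a dual-variety count, whereas the paper gets $\text{Sing}\,B_{f_{\mid X_\Pi}}=\text{Sing}\,B_f\cap\Pi$ for free from the nonsingularity of $X_\Pi$ via Lemma \ref{lem9}; you take irreducibility of $B_f\cap\Pi$ directly from Bertini; you explicitly keep $\Pi$ off the components of $S_{f,3}$, which the paper needs for $\dim S_{f_{\mid X_\Pi},3}\leq k-4$ but does not state; and the shrinking of $\mathcal S_f$ by $S_L$ (which the paper also performs) would make $\mathcal S_f\cap\Pi$ differ from the witness $S_{f_{\mid X_\Pi},2}$ your induction hypothesis supplies, but it is unnecessary, since Bertini applied to the hyperplanes through a fixed $p\in S_{f,2}$ already gives some $\Pi\in\mathcal G_f$ through $p$.
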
   
\proof It is well-known that if $X$ is a nonsingular variety and $f:X\to\mathbb P^k$ is a finite morphism, $\deg f=2$, then $B_f$ is a  nonsingular hypersurface in $\mathbb P^k$, and obviously, the theorem is true for sets of covers $f:X\to\mathbb P^k$ in the case of $k\leq 2$. Therefore we assume that $k\geq 3$ and $\deg f\geq 3$.

We use induction on $k$. Consider a cover $f:X\to\mathbb P^k$, generic outside of codimension three and assume that the theorem is true for the sets  of finite covers of projective spaces $\mathbb P^m$ for $m<k$. Let us show that $f$ is an $ind$-generic cover.
 
By Bertini's Theorem, the set 
$$\mathcal G_{f,1}=\{ \Pi\in \hat{\mathbb P}^k\, \mid\, 
X_{\Pi}=f^{-1}(\Pi)\subset X\, \text{is a nonsingular variety}\}$$ 
is an everywhere dense Zariski open subset of $\hat{\mathbb P}^k$. Denote also by $\mathcal G_{f,2}\subset \hat{\mathbb P}^k$ the set of hyperplanes $\Pi$ in $\mathbb P^k$ such that $\Pi$ does not contain an irreducible component of $S_{f,2}\subset \text{Sing}\, B_f$ and meets transversally 
$S_{f,2}=S_{f,n}\cup S_{f,c}\subset  \text{Sing}\, B_f$ at each point of $\Pi\cap S_{f,2}$. By Lemmas \ref{dim2} and \ref{dim1},  
$\mathcal G_{f,2}$ is an everywhere dense Zariski open subset of $\hat{\mathbb P}^k$.  

Put $\mathcal G_f=\mathcal G_{f,1}\cap\mathcal G_{f,2}$ and in notation used in Lemma \ref{proj3},  we put $G_2:=\mathcal G_f$, $S:=S_{f,2}$, and 
$\mathcal S:=\mathcal S_f$. 

Let us show that $f_{\mid X_{\Pi}}:X_{\Pi}\to \Pi=\mathbb P^{k-1}$ is a cover, generic outside of codimension three if $\Pi\in \mathcal G_f$.
To prove this, first of all, we note that $B_{f_{\mid X_{\Pi}}}=B_f\cap \Pi$ and 
$$\text{Sing}\, B_{f_{\mid X_{\Pi}}}=S_{f_{\mid X_{\Pi}},3}\sqcup S_{f_{\mid X_{\Pi}},n}\sqcup S_{f_{\mid X_{\Pi}},c},$$ 
where $S_{f_{\mid X_{\Pi}},n}=S_{f,n}\cap \Pi$, $S_{f_{\mid X_{\Pi}},c}=S_{f,n}\cap \Pi$. By Lemma 2, $\text{Sing}\, B_{f_{\mid X_{\Pi}}}=
\text{Sing}\, B_{f}\cap \Pi$, since  $\Pi\in \mathcal G_{f,1}$. Therefore $S_{f_{\mid X_{\Pi}},3}=S_{f,3}\cap \Pi$ and hence, 
$\dim S_{f_{\mid X_{\Pi}},3}\leq k-4$, since $\Pi\in \mathcal G_{f,2}$.

Consider a point $p\in (S_{f,c}\cup S_{f,n})\cap \Pi$. Since $\Pi\in\mathcal G_{f,2}$ and, in notation used in Subsection \ref{loc}, $S_{f,n}\cup S_{f,c}$ is given by equations $z_1=z_2=0$ in $V_{p,\varepsilon}$, then, without loss of generality, we can assume that $\frac{\partial\psi}{\partial z_k}(p)\neq 0$, where $z_1,\dots, z_k$ are coordinates in $V_{p,\varepsilon}$ and $\psi(z_1,\dots,z_k)=0$ is an equation of $V_{p,\varepsilon}\cap\Pi$. Therefore there exists a positive $\varepsilon_1<\varepsilon$ such that $z_1,\dots, z_{k-1}, \overline z_k=\psi(z_1,\dots,z_k)$ are local coordinates in 
$$U_{p,\varepsilon_1}=\{ q\in V_{p,\varepsilon}\, \mid\, |\overline z_k|\leq \varepsilon_1\, \text{and}\, |z_i(q)|<\varepsilon_1 \, \text{for}\, i=1,\dots,k-1\}.$$ 
Hence, in the neighborhood $U_{p,\varepsilon_1}\cap \Pi$ of the point $p\in \Pi$, the functions $z_1,\dots, z_{k-1}$ are local coordinates and 
$B_{f_{\mid X_{\Pi}}}\cap U_{p,\varepsilon_1}$ is given by equation $z_1z_2=0$ if $p\in S_{f_{\mid X_{\Pi}},n}$ and  $B_{f_{\mid X_{\Pi}}}\cap U_{p,\varepsilon_1}$ is given by equation $z_1^2-z_2^3=0$ if $p\in S_{f_{\mid X_{\Pi}},c}$. 

By induction on $k$ and applying Lemma \ref{irred}, we obtain that $f_{\mid X_{\Pi}}:X_{\Pi}\to \Pi=\mathbb P^{k-1}$ is a cover, generic outside of codimension three if $\Pi\in \mathcal G_f$. Therefore, by assumption that Theorem \ref{thm2} is true for sets of finite covers of projective spaces $\mathbb P^m$ for $m\leq k-1$, the morphisms $f_{\mid X_{\Pi}}:X_{\Pi}\to \Pi=\mathbb P^{k-1}$ are $ind$-generic covers for all $\Pi\in \mathcal G_f$ if we put 
$\mathcal S_{f_{\mid X_{\Pi}}}=\mathcal S_f\cap \Pi$ and  hence, by definition of $ind$-generic covers, $f:X\to\mathbb P^k$ is an $ind$-generic cover also.  \qed 

\begin{thm} \label{thm3} An $\text{ind}$-generic cover $f:X\to\mathbb P^k$  is a cover, generic outside of codimension three with $S_{f,2}=\mathcal S_f \setminus \mathcal S_{f,3}$, where $\mathcal S_{f,3}$ is a certain closed quasi-projective subvariety of $\mathcal S_f$, $\dim \mathcal S_{f,3}\leq k-3$.
\end{thm}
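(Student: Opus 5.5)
We argue by induction on $k$. For $k\leq 2$ an $ind$-generic cover is by definition a generic cover. For $k=2$ this is exactly a cover, generic outside of codimension three, with $S_{f,3}=\emptyset$ and $S_{f,2}=\text{Sing}\, B_f$. So assume $k\geq 3$ and that the statement holds for $ind$-generic covers of $\mathbb P^{k-1}$.

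Conditions (i) and (ii) of the definition are already available. Lemma \ref{coef} gives $f^*(B_f)=2R_f+C_f$ and birationality of $f_{\mid R_f}:R_f\to B_f$, and Lemma \ref{irred} gives irreducibility of $B_f$. So only (iii), the local structure of $\text{Sing}\, B_f$, remains. I would define $\mathcal S_{f,3}$ as follows. Let $\mathcal S_f^{\circ}\subset\mathcal S_f$ be the set of points $p\in\mathcal S_f$ at which $B_f$ is locally, in suitable coordinates, $z_1z_2=0$ or $z_1^2=z_2^3$. Put $\mathcal S_{f,3}=\mathcal S_f\setminus \mathcal S_f^{\circ}$, and put $S_{f,3}=(\text{Sing}\, B_f\setminus\mathcal S_f)\cup \mathcal S_{f,3}$. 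Since $\mathcal S_f$ is dense Zariski open in $\text{Sing}\, B_f$, the complement $\text{Sing}\, B_f\setminus\mathcal S_f$ has dimension $\leq k-3$ (here I use $\dim \text{Sing}\, B_f=k-2$, which follows from $\dim \text{Sing}\, B_{f_{\mid X_\Pi}}=k-3$ by the induction and $(\text{\bf ii})_2$). It then remains to show two things: $\mathcal S_f^{\circ}$ is Zariski open and dense in $\mathcal S_f$ with $\mathcal S_{f,3}$ of dimension $\leq k-3$, and every point of $\mathcal S_f^{\circ}$ is a $(k-2)$-node or a $(k-2)$-cusp.

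\textbf{Local step.} Take $p\in\mathcal S_f$. By $(\text{\bf ii})_2$ there is $\Pi\in\mathcal G_f$ meeting $\mathcal S_f$ transversally at $p$ with $p\in\mathcal S_{f_{\mid X_\Pi}}$. By induction, $f_{\mid X_\Pi}$ is generic outside of codimension three with $S_{f_{\mid X_\Pi},2}=\mathcal S_{f_{\mid X_\Pi}}\setminus\mathcal S_{f_{\mid X_\Pi},3}$. So if $p\notin \mathcal S_{f_{\mid X_\Pi},3}$, then $p$ is a $(k-3)$-node or a $(k-3)$-cusp of $B_f\cap\Pi$, and we want to lift this to $\mathbb P^k$.

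In the cusp case, I would first use Lemma \ref{lem1} and the description of Subsection \ref{loc} to get local coordinates over $\Pi$ with $z_1=-\frac12(w_1^3-3w_1w_2)$ on the degree-3 branch $W_{p,1}$. Lemma \ref{lem3} then extends this normal form to a neighbourhood in $\mathbb P^k$, so $B_f$ is $\widetilde z_1^2=\widetilde z_2^3$ near $p$; alternatively Lemma \ref{lem3'} gives the same conclusion. In the node case, the local monodromy is $\mathbb S_2\times\mathbb S_2$. Over $\Pi$ the two double branches $W_{p,1},W_{p,2}$ are smooth double covers, and each extends to a double cover of a $k$-disc whose discriminant is a smooth hypersurface (Lemma \ref{lem9}). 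These two smooth hypersurfaces meet transversally, because their traces on $\Pi$ do. Hence $B_f$ is $z_1z_2=0$ near $p$.

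\textbf{Global step.} I expect this to be the main obstacle: the point $p$ may lie in $\mathcal S_{f_{\mid X_\Pi},3}$ for every admissible $\Pi$ through $p$, and I must show that such points form a set of dimension $\leq k-3$. The plan is to take, on each irreducible component $S$ of $\mathcal S_f$, one point of node or cusp type; such a point exists because a generic $\Pi$ meets $S$ in a $(k-3)$-dimensional set, which is not contained in $\mathcal S_{f_{\mid X_\Pi},3}$. For cusp components, I would pass to $R_f\cup C_f$. The cusp gives an $A_3$-type singularity of $R_f\cup C_f$ upstairs, by Theorem \ref{T}(3), and Lemma \ref{A_3} spreads this to a dense Zariski open subset of the corresponding component; Lemma \ref{lem3'} then converts it back to cusps of $B_f$. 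Node points are open by the analogous argument with two transversal smooth branches, where transversality is an open condition. The locus where both fail is a proper closed subset of each component, and this gives the bound $\dim\mathcal S_{f,3}\leq k-3$.
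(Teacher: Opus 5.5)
Your proposal is correct. It shares the paper's local step: a $(k-3)$-cusp of $B_f\cap\Pi$ is lifted by Lemmas \ref{lem1} and \ref{lem3}, and a $(k-3)$-node via the two smooth double-cover branches from Lemmas \ref{lem2} and \ref{lem9}. Where you differ is in how the exceptional set is handled.

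The paper takes $\mathcal S_{f,3}$ to be the part of the nodal components of $\mathcal S_f$ lying in the locus $H_2$ where the Hessian of a local equation of $B_f$ has rank $<2$, i.e.\ where the two smooth branches are tangent. Its local argument treats every $p\in\mathcal S_f$ as an honest node or cusp of some hyperplane section, and it puts nothing from the cuspidal components into $\mathcal S_{f,3}$.

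You instead define $\mathcal S_{f,3}$ intrinsically as the non-node, non-cusp locus. You explicitly confront the points lying in $\mathcal S_{f_{\mid X_\Pi},3}$ for every admissible $\Pi$. You bound them by the count $\dim(S\cap\Pi)=k-3>k-4\geq\dim\mathcal S_{f_{\mid X_\Pi},3}$ together with a spreading-out argument via Lemmas \ref{A_3} and \ref{lem3'}; this is essentially the argument the paper reserves for Theorem \ref{gmain}. Your observation that a node of some section forces transversal branches upstairs shows that the two definitions agree on nodal components. Yours also covers cuspidal components, which the paper's $\mathcal S_{f,3}$ does not address.

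What the paper's Hessian buys is Zariski closedness for free. At a smooth point of $\text{Sing}\,B_f$ the Hessian has rank $2$ exactly at $(k-2)$-nodes (Morse--Bott), which is an algebraic condition. Your phrase ``transversality is an open condition'' only gives openness in the complex topology, so borrow the Hessian criterion on nodal components.

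Similarly, Lemma \ref{A_3} gives you only a dense Zariski open set of cusps, not Zariski openness of the whole cusp locus. Either take $\mathcal S_{f,3}$ to be the complement of the explicit open sets you produce (the statement allows this), or argue constructibility. Finally, before invoking Lemma \ref{lem3'}, shrink further to the Zariski open set where $\deg_q f=3$ and the other $d-3$ points of the fibre are unramified. This ensures the cusp of the germ is all of $B_f$ near $f(q)$.
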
   
\proof As in the proof of  Theorem \ref{thm2},  we use induction on $k$ and we can assume that $k\geq 3$ and $\deg f\geq 3$.
Consider an ${ind}$-generic cover $f:X\to\mathbb P^k$  and assume that the theorem is true for the sets  of finite covers of projective spaces 
$\mathbb P^m$ for $m<k$. Let us show that $f$ is a cover, generic outside of codimension three. By Lemmas \ref{irred} and \ref{coef}, to show this, it suffices only to show that the cover $f:X\to \mathbb P^k$ has property ($\text{iii}$) in the definition of covers, generic outside of codimension three. 
 
Let $S$ be an irreducible component of $\mathcal S_f$ and let $p\in \mathcal S_f$. Then there is a hyperplane $\Pi\in \mathcal G_f$ containing the point  $p$ and therefore $S\cap \Pi$ near of $p$ is (locally) an irreducible component of $\mathcal S_{f_{\mid X_{\Pi}}}$. Hence, by assumption of induction,   $S\cap\Pi$ locally is either a subset of nodal part, or cuspidal part of the discriminant of the cover  $f_{\mid X_{\Pi}}: X_{\Pi}\to \Pi$, generic outside of codimension three.  

In notation used in Subsection \ref{loc}, there is a neighborhood $V_{p,\varepsilon}$ of the point $p$ in $\Pi$ such that  $f^{-1}_{\mid X_{\Pi}}(V_{p,\varepsilon})=W_p=\sqcup_{i=1}^{d-2}W_{p,i}\subset X_{\Pi}$ is a disjoint union of $d-2$ non-singular connected neighborhoods such that if $p\in S_{f_{\mid X_{\Pi}},c}$ then $\deg f_{\mid W_{p,1}}=3$ and $\deg f_{\mid W_{p,i}}=1$ (i.e., $f_{\mid W_{p,i}}:W_{p,i}\to  V_{p,\varepsilon}$ are biholomorphic mappings) for $i \geq 2$, and if $p\in S_{f_{\mid X_{\Pi}},n}$ then $\deg f_{\mid W_{p,i}}=2$ for $i\leq 2$ and $\deg f_{\mid W_{p,i}}=1$ for $ i \geq 3$. Obviously, reducing $V_{p,\varepsilon}$ if necessary we can find a neighborhood $\widetilde V_p\subset \mathbb P^k$ such that $\widetilde V_p\cap\Pi=V_{p,\varepsilon}$ and $f^{-1}(\widetilde V_p)=\widetilde W_p=\sqcup_{i=1}^{d-2}\widetilde W_{p,i}\subset X$ is a disjoint union of $d-2$ non-singular connected neighborhoods such that if $p\in S_{f_{\mid X_{\Pi}},c}$ then $\deg f_{\mid \widetilde W_{p,1}}=3$ and $\deg f_{\mid \widetilde W_{p,i}}=1$ for $i \geq 2$, and if $p\in S_{f_{\mid X_{\Pi}},n}$ then $\deg f_{\mid\widetilde  W_{p,i}}=2$ for $i\leq 2$ and $\deg f_{\mid \widetilde W_{p,i}}=1$ for $ i \geq 3$. 

If $S\cap\Pi$ locally is a subset of the cuspidal part of the discriminant of the cover $f_{\mid X_{\Pi}}: X_{\Pi}\to \Pi$, generic outside of codimension three, then by Lemmas \ref{lem1} and \ref{lem3}, there is a neighborhood $U_p\subset \mathbb P^k$ and local coordinates $z_1,\dots, z_k$ in it such that $B_f\cap U_p$ is given by equation $z_1^2-z_2^3=0$. 

If $S\cap\Pi$ locally is a subset of the nodal part of the discriminant of the cover $f_{\mid X_{\Pi}}: X_{\Pi}\to \Pi$, then $\deg f_{\mid\widetilde  W_{p,i}}=2$ for $i\leq 2$ and by Lemmas \ref{lem2} and \ref{lem9}, the discriminants $B_{f_{\widetilde W_i}}$ of the mappings $f_{\mid\widetilde  W_{p,i}}$ are non-singular varieties and $B_{f_{\widetilde W_1}}\cup B_{f_{\widetilde W_2}}=B_{f}\cap \widetilde V_p$.

Let $\mathbb P^k=\bigcup_{j=1}^{k+1}\mathbb C^k_j$, where $\mathbb C^k_j$ is an affine space in which the $j$-th coordinate of homogeneous coordinate system in $\mathbb P^k$ is not equal to $0$. Let $x_{j,1},\dots, x_{j,k}$ be non-homogeneous coordinates in $\mathbb C^k_j$ and $h_j(x_{j,1},\dots,x_{j,k})=0$ be an equation of $B_f\cap \mathbb C^k_j$, Denote by $H_2\subset \mathbb P^k$ a projective variety given in each affine space $\mathbb C^k_j$ by equations
$$ \text{rk}\left( \begin{array}{ccc} \frac{\partial^2 h_j}{\partial x_{j,1}\partial x_{j,1}}  & \cdots & \frac{\partial^2 h_j}{\partial x_{j,1}\partial x_{j,k}} \\ \vdots & \cdots & \vdots \\ \frac{\partial^2 h_j}{\partial x_{j,k}\partial x_{j,1}}  & \cdots & \frac{\partial^2 h_j}{\partial x_{j,k}\partial x_{j,k}} \end{array}\right) < 2.$$ 
Then $\mathcal S_{f,3}$ is the intersection of $H_2$ with the union of the all irreducible components of $\mathcal S_{f}$ that meet $S_{f_{\mid X_{\Pi}},n}$ at the points $p\in \Pi$, $\Pi \in \mathcal G_f$.  \qed 

\begin{thm} \label{gmain} Let $f:X\to \mathbb P^k$ be a finite morphism from a non-singular projective variety $X, \dim X=k\geq 3$, and let  a hyperplane $\Pi\subset \mathbb P^k$ be such that $X_{\Pi}=f^{-1}(\Pi)$ is non-singular, $\Pi$ is not an irreducible component of the discriminant $B_f$ of $f$,  
and $f_{\mid\, X_{\Pi}}:X_{\Pi}\to \Pi=\mathbb P^{k-1}$ is a cover, generic outside of codimension three. Then $f$ also is a cover, generic outside of codimension three.
\end{thm}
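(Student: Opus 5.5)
The plan is to verify conditions (i)--(iii) of the definition directly for $f$. Everything known about $f_{\mid X_{\Pi}}$ is transported to a Zariski neighbourhood of the hyperplane section. It is then propagated to all of $B_f$ using irreducibility and the fact that a hyperplane meets every positive-dimensional subvariety of $\mathbb P^k$.

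Since $\Pi$ is not a component of $B_f$ and $X_{\Pi}$ is non-singular, we have $B_{f_{\mid X_{\Pi}}}=B_f\cap\Pi$. Moreover, the restrictions of $R_f$ and $C_f$ to $X_{\Pi}$ are $R_{f_{\mid X_{\Pi}}}$ and $C_{f_{\mid X_{\Pi}}}$, at least outside a set of codimension $\ge 2$ in $X_\Pi$; this uses the non-singularity of $X_\Pi$ as in Lemma \ref{lem9}.

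\textbf{Conditions (i) and (ii).} Write $f^*(B_f)=\sum r_iR_{f,i}+C_f$. Every component $B_{f,j}$ of $B_f$ is a hypersurface, so $B_{f,j}\cap\Pi$ is a non-empty hypersurface in $\Pi$. Restricting the multiplicity decomposition to $\Pi$ and comparing with $f_{\mid X_{\Pi}}^*(B_{f_{\mid X_{\Pi}}})=2R+C$ forces every $r_i=2$. It also forces each $R_{f,i}\cap X_\Pi$ to map birationally onto its image, by the same degree count as in (\ref{eq1}) in the proof of Lemma \ref{coef}. Irreducibility of $B_f$ follows as in Lemma \ref{irred}. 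Suppose $B_f=B_1\cup B_2$ with two components. For a fixed $\Pi$ we cannot simply choose $\Pi$ generically, so the argument must be different: $B_1\cap\Pi$ and $B_2\cap\Pi$ are distinct hypersurfaces in $\Pi$, since $\Pi$ contains no component of $B_f$ and distinct components cannot have the same hyperplane section if that section is reduced. Their union would then be a reducible $B_{f_{\mid X_\Pi}}$, a contradiction. Birationality of $R_f\to B_f$ then follows from the degree count.

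\textbf{Condition (iii).} Here I would use the local extension lemmas. Let $\Sigma=\mathrm{Sing}\,B_f$, whose components have dimension $\ge k-3$ or so. Take any irreducible component $T$ of $\Sigma$ of dimension $k-2$. Then $T\cap\Pi\neq\emptyset$ has dimension $k-3$, and $T\cap\Pi\subset \mathrm{Sing}\,B_{f_{\mid X_\Pi}}$, which has dimension $\le k-3$. Hence a component of $T\cap\Pi$ is a component of $\mathrm{Sing}\,B_{f_{\mid X_\Pi}}$, and it must meet $S_{f_{\mid X_\Pi},2}$ because $S_{f_{\mid X_\Pi},3}$ has dimension $\le k-4$. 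Components of $\Sigma$ of dimension $\le k-3$ go into $S_{f,3}$.

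At a point $p\in T\cap S_{f_{\mid X_\Pi},c}$, we have Lemma \ref{lem1} on $X_\Pi$, and Lemma \ref{lem3} (with $z_k$ the equation of $\Pi$) gives normal form coordinates for $f$ near $p$. Thus $p$ is a $(k-2)$-cusp of $B_f$. At a point $p\in T\cap S_{f_{\mid X_\Pi},n}$, the argument in the proof of Theorem \ref{thm3} applies: the local monodromy splits into two degree-$2$ sheets, Lemmas \ref{lem2} and \ref{lem9} give two smooth transversal branches, and so $p$ is a $(k-2)$-node.

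Being a $(k-2)$-node or a $(k-2)$-cusp is an open condition along $T$. Hence the generic point of each such $T$ is of the same type; for the cuspidal case this uses an argument like Lemma \ref{A_3} applied to $R_f\cup C_f$ upstairs. I would then define $S_{f,c}$ and $S_{f,n}$ as the loci of such points and put $S_{f,3}$ equal to the complement. It remains to check that $S_{f,3}$ is closed of dimension $\le k-3$: it is a proper closed subset of each $(k-2)$-dimensional $T$, union the small components.

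\textbf{Main obstacle.} The main obstacle is the openness and constructibility of the cusp/node loci along $T$: proving that the set of $(k-2)$-cusps in $T$ is Zariski open, not just analytically open. I would handle this with Lemma \ref{A_3} upstairs: a cusp of $B_f$ corresponds to an $A_3$ point of $R_f\cup C_f$, and Lemma \ref{lem3'} converts it back. For nodes, I would use the Hessian-rank description $H_2$ from Theorem \ref{thm3}.
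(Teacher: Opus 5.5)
Your proposal is correct and follows essentially the paper's route. Conditions (i)--(ii) come from restricting to $X_\Pi$ and the degree count of Lemma \ref{coef}. Then on each $(k-2)$-dimensional component of $\mathrm{Sing}\,B_f$ you find a point of $S_{f_{\mid X_\Pi},c}$ or $S_{f_{\mid X_\Pi},n}$, which becomes a $(k-2)$-cusp of $B_f$ by Lemmas \ref{lem1} and \ref{lem3} (spread along the component via Lemmas \ref{lem3'} and \ref{A_3}), or a $(k-2)$-node via the two local double-cover sheets and Lemma \ref{lem9}.

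The deviations are minor. Your irreducibility argument rightly avoids choosing $\Pi$ generically, which the paper's appeal to the irreducibility argument for $ind$-generic covers tacitly does; the reducedness of $B_f\cap\Pi$ you need follows from Lemma \ref{lem9} applied to the degree-two sheet over a general point of $B_{f_{\mid X_\Pi}}$. For openness of the nodal locus you use the Hessian-rank set $H_2$, whereas the paper takes the complement of $\mathrm{Sing}\,S\cup B_{f_{\mid R_S}}$ for the double cover $R_S=R_f\cap f^{-1}(S)\to S$.
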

\proof We have $\deg f=\deg f_{\mid\, X_{\Pi}}$, since $\Pi\not\subset B_f$. Therefore for each $p\in \Pi$ and for each $q\in f^{-1}(p)$ we have $\deg_q f=\deg_q f_{\mid\, X_{\Pi}}$. In addition, we have $B_{f_{\mid X_{\Pi}}}=B_f\cap X_{\Pi}$ and $R_{f_{\mid X_{\Pi}}}=R_f\cap X_{\Pi}$.

Using arguments that used in the proof of Lemmas \ref{irred} and \ref{coef}, it is easy to show that $B_f$ is irreducible, $f^*(B_f)=2R_f+C_f$, and $f_{\mid\, R_f}:R_f \to  B_f$ is a birational morphism. Therefore, by Lemma \ref{lem9}, $\text{Sing}\,B_{f_{\mid X_{\Pi}}}=\text{Sing}\, B_f\cap X_{\Pi}$.

Let $S$ be one of irreducible components of $\text{Sing}\, B_f$. The case of $\dim S\leq k-3$ can be omitted, since we want to prove that $f$ is  a cover,  generic outside of codimension three. Therefore we can assume that $\dim S=k-2$.   Note also that $S\not\subset \Pi$, since in the opposite case, there is a point $p\in S\cap (B_{f_{\mid\, X_{\Pi}}}\setminus \text{Sing}\, B_{f_{\mid\, X_{\Pi}}})$ that contradicts Lemma \ref{lem9}.
Therefore $S\cap \Pi$ contains the closure of an irreducible component of $S_{f_{\mid X_{\Pi}},2}$. Denote this component by $T$.

If $T$ is  a component of the closure of $S_{f_{\mid X_{\Pi}},n}$, then there is a point $p\in S_{f_{_{\mid X_{\Pi}}},n}\cap S$. Therefore, by Claim \ref{cl}, $f^{-1}(p)\cap R_f$ is two points, say $q_1$ and $q_2$, and $\deg_{q_1} f=\deg_{q_2} f=2$. Hence, $p$ is a $(k-2)$-node of the discriminant  $B_f$ and therefore all points of $S\setminus (\text{Sing}\, S \cup B_{f_{\mid R_S}})$ are $(k-2)$-nodes of $R_f$, where $f_{\mid\, R_S}: R_S=R_f\cap f^{-1}(S)\to S$ is a morphism of $\deg f_{\mid\, R_S}=2$. Note that $\dim\, (\text{Sing}\, S \cup B_{f_{\mid R_S}})\leq k-3$.

If $T$ is  a component of the closure of $S_{f_{\mid X_{\Pi}},c}$, then there is a point $p\in S_{f_{\mid X_{\Pi}},c}\cap S$. Therefore the existence of Zariski open  subset $S^0$ of $S$ the points of which are  $(k-2)$-cusps of $B_f$, directly follows from Lemmas \ref{lem3} -- \ref{A_3}. \qed

\section{Proof of Theorems 1 -- 3} \label{main-proof}
\subsection{Proof of Theorem 1.}
A cover $f: X\to\mathbb P^k$, $k\geq 3$, generic outside of codimension three is an $ind$-generic cover. Therefore there is a plane $\mathbb P^2\subset \mathbb P^k$ such that $f_{\mid\, X_{\mathbb P^2}}:X_{\mathbb P^2}=f^{-1}(\mathbb P^2)\to\mathbb P^2$ is a generic cover of the plane, 
$\deg f=\deg f_{\mid\, X_{\mathbb P^2}}$, $B_{f_{\mid\, X_{\mathbb P^2}}}=B_f\cap \mathbb P^2$, and $\text{Sing}\, B_{f_{\mid\, X_{\mathbb P^2}}}= S_{f,2}\cap \mathbb P^2$ contains only ordinary node and ordinary cusps of $B_{f_{\mid\, X_{\mathbb P^2}}}$. 

If $B_{f,c}=\emptyset$, then all singular points of $B_{f_{\mid\, X_{\mathbb P^2}}}$ are only ordinary nodes. Therefore $\pi_1(\mathbb P^2\setminus  B_{f_{\mid\, X_{\mathbb P^2}}})$ is an abelian group and hence, the monodromy group $G_{}\simeq \mathbb S_{\deg f}$ is an abelian group. Cnsequently, $\deg f=2$ and $B_f$ is a nonsingular variety, since $X$ is a non-singular variety.

If $B_{f,n}=\emptyset$, then all singular points of $B_{f_{\mid\, X_{\mathbb P^2}}}$ are only ordinary cusps. Therefore by Theorem 2 in \cite{K3}, 
$\deg f=\deg f_{\mid\, X_{\mathbb P^2}}\leq 5$.

\subsection{Proof of Theorem \ref{main1}} As it was mentioned above, Theorem \ref{main1} is well-known if $k=2$.  
Therefore we will assume that $k\geq 3$ and use induction on $k$, i.e., assume that for $k\geq 3$ and for each non-singular irreducible projective variety $X_0\subset \mathbb P^{N-1}$, $\dim X_0=k-1$, there is a $(N-k -1)$-dimensional linear subspace of $\mathbb P^{N-1}$, $L\cap X_0=\emptyset$, such that the restriction $f_0:X_0\to\mathbb P^{k-1}$ of the linear projection  $\text{pr}_L:\mathbb P^{N-1}\to\mathbb P^{k-1}$ to $X_0$ is a cover, generic outside of codimension three. 

By Bertini Theorem, there is a hyperplane $\mathbb P^{N-1}\subset \mathbb P^N$ such that  $X_0=X\cap\mathbb P^{N-1}$ is a non-singular variety. 
By assamption, there is a $(N-k -1)$-dimensional linear subspace of $\mathbb P^{N-1}$, $L\cap X_0=\emptyset$, such that the restriction $f_0:X_0\to\mathbb P^{k-1}$ of the linear projection  $\text{pr}_L:\mathbb P^{N-1}\to\mathbb P^{k-1}$ to $X_0$ is a cover, generic outside of codimension three. 

Consider the restriction $f:X\to\mathbb P^{k}$ of the linear projection  $\text{pr}_L:\mathbb P^{N}\to\mathbb P^{k}$ to $X$. It is a finite morphism, since $X\cap L=X_0\cap L=\emptyset$ for $L\subset P^{N-1}$.  Note that $X_{\Pi}=X_0\not\subset R_f$, since $\deg f=\deg f_0=\dim X=d$. 

Denote $\Pi=\text{pr}_L(\mathbb P^{N-1})$, where $\mathbb P^{N-1}$ is the hyperplane containing $X_0$. Then $X_{\Pi}=f^{-1}(\Pi)=X_0$ and Theorem \ref{main1} directly follows from Theorem \ref{gmain}. \qed  

\subsection{Proof of Theorem 3.} 
Let $f_i: X\to\mathbb P^k$, $i=1,2$, be  covers, generic outside of codimension three with the same discriminant $B=B_{f_1}=B_{f_2}\subset \mathbb P^k$ and $\iota: \mathbb P^2\hookrightarrow \mathbb P^k$, $k\geq 3$, be the same plane as in the proof of Theorem 1. The existence of such $\mathbb P^2$ guarantees by Bertini Theorem, since $f_i$ are  $ind$-generic covers and the intersection of two everywhere dense Zariski open subsets in $\hat{\mathbb P}^{k-j}$, $1\leq j\leq k-2$, is not empty. 

It follows from Stein Theorem (\cite{St}) that the covers $f_1$ and $f_2$ are equivalent if and only if $d_1=d_2=d$ and there is an inner automorphism $\varphi:\mathbb S_d\to\mathbb S_d$ of $\mathbb S_d$ such that $f_{1*}=\varphi\circ f_{2*}$, where $f_{i,*}$ are the monodromy homorphisms of the covers $f_i$.

The imbedding $\iota$ defines a homomorphism $\iota_*: \pi_1(\mathbb P^2\setminus B)\to \pi_1(\mathbb P^k\setminus B)$. 
The monodromy homomorphisms $f_{i\mid\, X_{\mathbb P^2}*}:\pi_1(\mathbb P^2\setminus B)\to \mathbb S_{d_i}$, $d_i=\deg f_i$, are epimorphisms and 
$f_{i\mid\, X_{\mathbb P^2}*}$ are the compositions of the homomorphsm $\iota_*$ an the monodromy homomorphisms 
$f_{i*}:\pi_1(\mathbb P^k\setminus B)\to \mathbb S_{d_i}$, $f_{i\mid\, X_{\mathbb P^2}*}=f_{i*}\circ \iota_*$. Therefore the covers $f_1$ and $f_2$ are equivalent if and only if the covers $f_{i\mid\, X_{\mathbb P^2}}$ and $f_{i\mid\, X_{\mathbb P^2}}$ are equivalent.

To compleat the proof of Theorem 3 it suffices to note that in the case of $k=2$, Theorem 3(1) is proved in \cite{N}  and the Theorem 3(2) is proved in \cite{K2}.

\ifx\undefined\bysame
\newcommand{\bysame}{\leavevmode\hbox to3em{\hrulefill}\,}
\fi


\begin{thebibliography}{99}

\bibitem{Ch} { O. Chisini:} {\it Sulla identita birazionale delle funzioni algebriche di due variabili dot ate di una
medesima curva di diramazione,}  Rend. 1st. Lombardo, {\bf 77} (1944),  339 -- 356.

\bibitem{C-F} {C. Ciliberto, F. Flamini:} {\it On the branch curve of a generic projection of a surface to a plane,}
Trans. Amer. Math. Soc., {\bf 363:7} (2011), 3457 -- 3471.

\bibitem{K2} Vik.S. Kulikov: {\it On Chisini's conjecture. II,} Izv. Math., {\bf 72:5} (2008), 63 -- 76.

\bibitem{K3} {Vik.S. Kulikov:} {\it Generalized Chisini's Conjecture,} Proc. Steklov Inst. Math., {\bf 241} (2003), 110 -- 119.

\bibitem{Ku} Vik.S. Kulikov: {\it On the structure of the fundamental group of the complement of algebraic curves in 
$\mathbb C^2$}, Russian Acad. Sci. Izv. Math., {\bf 40:2} (1993), 443 -- 454. 

\bibitem{K_G} {Vik.S. Kulikov:} {\it On the  almost generic covers of the projective plane, } Pure and Appl. Math. Quarterly, {\bf 16:4} (2020), 1067 -- 1082.

\bibitem{N} S.Yu. Nemirovski: {\it Kulikov's theorem on the Chisini conjecture,} Izv. Math., {\bf 65:1} (2001), 71 -- 74.

\bibitem{Sh} {I.R. Shafarevich:} {\it Basic algebraic geometry,} (1972), 568 pp.   
\bibitem{St}{K. Stein:} {\it Analytische Zerlegungen komplexer R\"aume,} Math. Ann. {\bf 132} (1956), 63 -- 93.

\end{thebibliography}
 \end{document}